\newcommand{\rene}{\color{black}}
\newcommand{\normal}{\color{black}}
\definecolor{mno}{rgb}{0.5,0.1,0.5}
\renewcommand{\leq}{\leqslant}
\renewcommand{\geq}{\geqslant}
\renewcommand{\le}{\leqslant}
\renewcommand{\ge}{\geqslant}
\renewcommand{\Re}{\ensuremath{\operatorname{Re}}}
\newcommand{\R}{\mathds R}
\newcommand{\Rd}{{\mathds R^d}}
\newcommand{\Pp}{\mathds P}
\newcommand{\Ee}{\mathds E}
\newcommand{\I}{\mathds 1}
\newcommand{\Z}{\mathds Z}
\newcommand{\var}{\mathrm{Var}}
\newcommand{\Bb}{\mathscr{B}}
\newcommand{\scalp}[2]{\langle#1,#2\rangle}
\newcommand{\lrscalp}[2]{\left\langle#1,#2\right\rangle}
\newcommand{\sfera}{\mathds{S}}
\newcommand{\id}{\operatorname{id}}
\newcommand{\supp}{\operatorname{supp}}
\newcommand{\Rank}{\operatorname{Rank}}
\newcommand{\Leb}{\operatorname{Leb}}
\newcommand{\nnu}{\mathsf{n}}
\newcommand{\mmu}{\mathsf{\pi}}
\newtheorem{theorem}{Theorem}[section]
\newtheorem{lemma}[theorem]{Lemma}
\newtheorem{proposition}[theorem]{Proposition}
\newtheorem{corollary}[theorem]{Corollary}
\theoremstyle{definition}
\newtheorem{example}[theorem]{Example}
\newtheorem{remark}[theorem]{Remark}
\newtheorem*{ack}{Acknowledgement}
\begin{document}

\title[On the Coupling Property of OU Processes]{\bfseries On the Coupling Property and the Liouville Theorem for Ornstein-Uhlenbeck Processes}

\author{Ren\'{e} L.\ Schilling \qquad Jian Wang}
\thanks{\emph{R.\ Schilling:} TU Dresden, Institut f\"{u}r Mathematische Stochastik, 01062 Dresden, Germany. \texttt{rene.schilling@tu-dresden.de}}
\thanks{\emph{J.\ Wang:}
School of Mathematics and Computer Science, Fujian Normal
University, 350007, Fuzhou, P.R. China \emph{and} TU Dresden,
Institut f\"{u}r Mathematische Stochastik, 01062 Dresden, Germany.
\texttt{jianwang@fjnu.edu.cn}}

\date{}

\maketitle

\begin{abstract} Using a coupling for the weighted sum of
independent random variables and the explicit expression of the
transition semigroup of Ornstein-Uhlenbeck processes driven by
compound Poisson processes, we establish the existence of
a successful coupling and the Liouville theorem for general
Ornstein-Uhlenbeck processes. Then we present the explicit coupling
property of Ornstein-Uhlenbeck processes directly from the behaviour
of the corresponding symbol or characteristic exponent. This
approach allows us to derive
gradient estimates for Ornstein-Uhlenbeck processes via the symbol.

\medskip

\noindent\textbf{Keywords:} Ornstein-Uhlenbeck processes; coupling
property; Liouville theorem; gradient estimates.

\medskip

\noindent \textbf{MSC 2010:} 60J25; 60J75.
\end{abstract}

\section{Main Results}\label{section1}
Let $(X^x_t)_{t\ge0}$ be an $n$-dimensional Ornstein-Uhlenbeck
process, which is defined as the unique strong solution of the
following stochastic differential equation
\begin{equation}\label{ou1}
    dX_t = AX_t\,dt + B\,dZ_t,\qquad X_0=x\in\R^n.
\end{equation}
Here $A$ is a real $n\times n$ matrix, $B$ is a real $n\times d$
matrix and $Z_t$ is a L\'{e}vy process in $\R^d$; note that we allow $Z_t$ to take values in a proper subspace of $\R^d$. It is well
known that
$$
    X_t^x
    =e^{tA}x + \int_0^t e^{(t-s)A}B\,dZ_s.
$$
The characteristic exponent or symbol $\Phi$ of $Z_t$, defined by
$$
    \Ee\bigl(e^{i\scalp{\xi}{Z_t}}\bigr)
    =e^{-t\Phi(\xi)},\quad \xi\in\R^d,
$$
enjoys the following L\'{e}vy-Khintchine representation:
\begin{equation}\label{ou2}
     \Phi(\xi)
     =\frac{1}{2}\scalp{Q\xi}{\xi} +i\scalp{b}{\xi} +\int_{z\neq 0} \Bigl(1-e^{i\scalp{\xi}{z}}+i\scalp{\xi}{z}\I_{B(0,1)}(z)\Bigr)\nu(dz),
\end{equation}
where $Q=(q_{j,k})_{j,k=1}^d$ is a positive semi-definite matrix,
$b\in\Rd$ is the drift vector and $\nu$ is the L\'evy measure, i.e.\
a $\sigma$-finite measure on $\R^d\setminus\{0\}$ such that
$\int_{z\neq 0}(1\wedge |z|^2)\,\nu(dz)<\infty$. For every
$\varepsilon>0$, define ${\nu}_\varepsilon$ on $\R^d$ as follows:
\begin{equation*}
    {\nu}_\varepsilon(C)
    =
    \begin{cases}
        \nu(C),                           & \text{if\ \ } \nu(\R^d)<\infty;\\
        \nu(C\setminus \{z: |z|<\varepsilon\}), & \text{if\ \ } \nu(\R^d)=\infty.
    \end{cases}
\end{equation*}

Let $(Y_t)_{t\ge0}$ be a Markov process on $\R^n$ with transition function $P_t(x,\cdot)$. Then, according to \cite{Li,T, SW}, we say that $(Y_t)_{t\ge0}$
admits a \emph{successful coupling} (also: enjoys the \emph{coupling property}) if for any $x,y\in\R^n$,
\begin{equation*}\label{prex1}
    \lim_{t\rightarrow\infty}\|P_t(x,\cdot)-P_t(y,\cdot)\|_{\var}=0,
\end{equation*}
where $\|\cdot\|_{\var}$ stands for the total variation
norm. If a Markov process admits a successful coupling, then it also
has the Liouville property, i.e.\ every bounded harmonic function is
constant; in this context a function $f$ is harmonic, if $Lf=0$
where $L$ is the generator of the Markov process. See \cite{CG,CW}
and the references therein for this result and more details on the
coupling property.

Let $A$ be an $n\times n$ matrix. We say that an eigenvalue $\lambda$ of $A$ is
\emph{semisimple} if the dimension of the corresponding eigenspace
is equal to the algebraic multiplicity of $\lambda$ as a root of characteristic polynomial of
$A$. Note that for symmetric matrices $A$ all eigenvalues are real and semisimple.
Recall that for any two bounded measures $\mu$ and $\nu$ on $(\R^d,\Bb(\R^d))$,
$\mu\wedge\nu:=\mu-(\mu-\nu)^+$, where $(\mu-\nu)^{\pm}$ refers to the
Jordan-Hahn decomposition of the signed measure $\mu-\nu$. In
particular, $\mu\wedge\nu=\nu\wedge\mu$, and $\mu\wedge
\nu\,(\R^d)=\frac{1}{2}\big[\mu(\R^d)+\nu(\R^d)-\|\mu-\nu\|_{\var}\big].$

\medskip

One of our main results is the following
\begin{theorem}\label{th1}
    Let $P_t(x,\cdot)$ be the transition probability of the Ornstein-Uhlenbeck process $\{X_t^x\}_{t\ge0}$ given by \eqref{ou1}. Assume that $\Rank(B)=n$ (which implies $n\le d$), and that there exist $\varepsilon,\delta>0$ such that
\begin{equation}\label{th2233}
    \inf_{z\in\R^d,|z|\le \delta}\nu _\varepsilon\wedge (\delta_z*\nu_\varepsilon)(\R^d)>0.
\end{equation}

      If the real parts of all eigenvalues of $A$ are non-positive and if all purely imaginary eigenvalues are semisimple, then there exists a constant $C=C(\varepsilon,\delta,\nu,A,B)>0 $ such that for all $x,y\in\R^n$ and $t>0$,
\begin{equation}\label{th21}
    \|P_t(x,\cdot)-P_t(y,\cdot)\|_{\var}
    \le
    \frac{C(1+|x-y|)}{\sqrt{t}}\wedge2.
\end{equation}


\end{theorem}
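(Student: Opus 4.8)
The plan is to reduce the total variation distance to a shift of the law of the stochastic integral, and then to bound that shift by a Mineka-type coupling of a weighted sum of the jumps. Writing $\mu_t:=\mathrm{Law}\bigl(\int_0^t e^{(t-s)A}B\,dZ_s\bigr)$, we have $P_t(x,\cdot)=\delta_{e^{tA}x}*\mu_t$, so that by translation invariance of the total variation norm
\[
\|P_t(x,\cdot)-P_t(y,\cdot)\|_{\var}=\|\mu_t-\delta_v*\mu_t\|_{\var},\qquad v:=e^{tA}(x-y).
\]
First I would split the driving noise as $Z=Z^{(1)}+Z^{(2)}$, with $Z^{(1)}$ the compound Poisson process whose L\'evy measure is $\nu_\varepsilon$ (finite, of total mass $\lambda:=\nu_\varepsilon(\R^d)$) and $Z^{(2)}$ independent of it. Then $\mu_t=\mu_t^{(1)}*\mu_t^{(2)}$ for the corresponding integrals, and since $\|\sigma*\rho\|_{\var}\le\|\sigma\|_{\var}$ for any probability measure $\rho$, it suffices to bound $\|\mu_t^{(1)}-\delta_v*\mu_t^{(1)}\|_{\var}$; this is precisely why only the compound Poisson part matters. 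Finally, the eigenvalue hypotheses yield $M:=\sup_{t\ge0}\|e^{tA}\|<\infty$ (non-positive real parts together with semisimplicity of the purely imaginary eigenvalues rule out polynomial growth), so $|v|\le M|x-y|$, and they let me split $\R^n=V_0\oplus V_-$ into the generalized eigenspaces of the purely imaginary eigenvalues and of those with strictly negative real part.

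Next I would realise $\mu_t^{(1)}$ as the law of the weighted sum $\sum_{k=1}^{N_t}W_k\xi_k$, where $N_t\sim\mathrm{Poisson}(\lambda t)$, the $\xi_k$ are i.i.d.\ with law $\nu_\varepsilon/\lambda$ at jump times $\tau_k$, and $W_k=e^{(t-\tau_k)A}B$ is surjective onto $\R^n$ because $\Rank(B)=n$ and $e^{sA}$ is invertible. Conditioning on the jump times, I couple the two sums jump by jump. Condition \eqref{th2233} furnishes, for every $z$ with $|z|\le\delta$, an overlap mass $\nu_\varepsilon\wedge(\delta_z*\nu_\varepsilon)(\R^d)\ge\lambda p$ with $p>0$, which is exactly what is needed to build a coupling $(\xi_k,\xi_k')$ with both marginals $\nu_\varepsilon/\lambda$ and with $\xi_k'-\xi_k$ taking the values $\pm z_k$ symmetrically (and $0$ otherwise) on an event of probability $\ge2p$. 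Choosing $z_k$ with $|z_k|\le\delta$ so that $W_kz_k$ is aligned with the current gap $v-D_{k-1}$, the partial difference $D_k=\sum_{j\le k}W_j(\xi_j'-\xi_j)$ is steered along the segment towards $v$, so that $|v-D_k|$ performs a one-dimensional symmetric random walk; a single successful translation by the residual gap, possible once $|v-D_k|$ is within reach, then produces $D_{N_t}=v$ exactly and makes the two sums coincide.

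The coupling is successful exactly when this one-dimensional walk reaches $0$ by the last jump, and the first-passage tail of a symmetric random walk bounds the conditional failure probability by $C|v|/\sqrt{\Sigma_t}$, where $\Sigma_t\asymp\sum_{k\ \mathrm{active}}|W_kz_k|^2$ is the accumulated variance. The heart of the matter is to show $\Sigma_t\gtrsim t$: on $V_-$ the component of $v$ is exponentially small in $t$ and costs essentially nothing, whereas on $V_0$ the semisimplicity of the purely imaginary eigenvalues makes $e^{rA}$ a bounded invertible map with uniformly bounded inverse, so that (using $\Rank(B)=n$) the steps can be chosen of size bounded below in the $V_0$-directions for every $r\in[0,t]$. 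Hence each of the $\asymp\lambda t$ jumps contributes a bounded-below amount to $\Sigma_t$, whence $\Sigma_t\gtrsim t$. Averaging $C|v|/\sqrt{\Sigma_t}$ over $N_t\sim\mathrm{Poisson}(\lambda t)$, the event that $N_t$ is small being exponentially rare, yields a bound of order $(1+|x-y|)/\sqrt{t}$, which together with the trivial estimate $\|\cdot\|_{\var}\le2$ gives \eqref{th21}.

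The main obstacle is the joint control relating the geometry of $A$ and $B$ to the coupling: constructing a single coupling of the weighted sum that is simultaneously marginal-preserving, steerable towards an arbitrary target $v$, and produces an \emph{exact} hit $D_{N_t}=v$ (the total variation distance of the discrete compound Poisson law being insensitive to approximate matching), while guaranteeing that $\Sigma_t$ grows linearly in $t$. It is here that the two structural hypotheses are indispensable: $\Rank(B)=n$ makes every $W_k$ surjective so that any direction is reachable, and the semisimplicity of the purely imaginary eigenvalues (with non-positive real parts) keeps $e^{rA}$ and its inverse bounded on $V_0$, preventing the reachable step sizes — and thus the coupling power — from degenerating as $r\to\infty$.
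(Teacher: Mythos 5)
Your overall architecture coincides with the paper's: discard the small jumps by independence, realise the compound Poisson part as a weighted sum of i.i.d.\ jumps conditionally on the jump times, run a Mineka-type coupling fed by \eqref{th2233}, use the spectral hypotheses only through $C_A:=\sup_{s\ge0}\|e^{sA}\|<\infty$, and finish with a one-dimensional first-passage estimate. The gaps are in the coupling walk itself, and they are genuine. First, the ``steer towards the gap, then make one exact-hit translation'' scheme breaks down: the coupling you construct from \eqref{th2233} is, by design, symmetric, so an exact-hit attempt sends the gap to $2g$ with the same probability with which it sends it to $0$. After $j$ unlucky attempts the gap is $2^{j}g$, and it stops being expressible as $e^{\sigma_i A}Bz$ with $|z|\le\delta$ (the only regime in which \eqref{th2233} yields an overlap) as soon as $2^{j}C_A\|\bar B\|\,|x-y|>\delta$; you provide no recovery mechanism, and the probability of this escape is of order $|x-y|/\delta$, a constant in $t$, so it cannot be absorbed into a bound decaying like $t^{-1/2}$. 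The paper avoids adaptivity altogether: it takes $a_i=\bar B e^{(t-(t_1+\cdots+t_i))A}(x-y)$, so that \emph{every} step difference equals $\pm v$ with $v=e^{tA}(x-y)$; the difference walk then lives on the lattice $\Z v$, exact hitting is automatic at first passage to $1$, the only constraint is the uniform bound $|a_i|\le C_A\|\bar B\||x-y|\le\delta$, and the restriction to small $|x-y|$ is removed at the very end by chaining --- which is exactly where the factor $(1+|x-y|)$ in \eqref{th21} comes from.

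Second, the claim that steps can be chosen ``of size bounded below in the $V_0$-directions'', hence $\Sigma_t\gtrsim t$, is false, and the $V_0\oplus V_-$ decomposition cannot repair it. Take $x-y\in V_-$. Then $v=e^{tA}(x-y)\in V_-$ and $|v|\lesssim e^{-c_0t}|x-y|$; the coupling must nevertheless be run (a measure and its tiny shift need not be close in total variation --- that is the entire point of \eqref{th2233}), and the difference walk must move along $\hat v$ itself, since a walk with generic multidimensional steps hits the exact point $v$ with probability zero. But if $e^{\sigma A}Bz=c\,\hat v$ with $|z|\le\delta$, then projecting onto $V_-$ (the spectral projection commutes with $e^{\sigma A}$, and $e^{-\sigma A}$ expands $V_-$ by at least $e^{c_0\sigma}/C$) forces $c\le C\delta e^{-c_0\sigma}$ for a jump at time $\sigma$, so $\Sigma_t=O(1)$ --- exponentially far from $\Sigma_t\gtrsim t$. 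What saves the estimate is not an absolute variance bound but scale invariance: the first-passage probability depends only on the ratio of gap to step size, i.e.\ on the number of lattice units $\approx C_A\|\bar B\||x-y|/\delta$, together with the number of \emph{active} coupling steps, which is $\gtrsim\gamma(\delta)\lambda t$ with high probability; note that this last point requires Chebyshev for the number of nonzero steps (whose probability \eqref{th2233} bounds below, the paper's estimate \eqref{lll2}), not merely the remark that $N_t$ is rarely small. Once you replace the steering scheme by the lattice walk and the variance argument by this ratio argument plus chaining, your outline becomes the paper's proof (its Steps 4--6).
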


As a consequence of Theorem \ref{th1}, we immediately obtain the following result which partly answers the following question about Liouville theorems for non-local operators from \cite[page 458]{PZ}: \emph{A challenging task would be to apply other probabilistic techniques, based on ... coupling to non-local operators}.

\begin{corollary}
    Under the conditions of Theorem \ref{th1}, the Ornstein-Uhlenbeck process $\{X_t^x\}_{t\ge0}$ admits a successful coupling and has the Liouville property.
\end{corollary}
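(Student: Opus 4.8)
My plan is to deduce both assertions directly from Theorem~\ref{th1}, which I take as given.

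First I would read off the successful coupling from the quantitative bound \eqref{th21}. Fixing $x,y\in\R^n$, the right-hand side $\frac{C(1+|x-y|)}{\sqrt t}\wedge 2$ tends to $0$ as $t\to\infty$, so that $\lim_{t\to\infty}\|P_t(x,\cdot)-P_t(y,\cdot)\|_{\var}=0$. Since $x,y$ were arbitrary, this is exactly the defining property of a successful coupling, and no further work is needed at this stage.

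Next I would pass from the successful coupling to the Liouville property. Let $f$ be a bounded harmonic function, i.e.\ $Lf=0$ where $L$ is the generator of $\{X^x_t\}_{t\ge0}$. The key intermediate step is the identity $P_tf=f$ for every $t>0$; granting it, for arbitrary $x,y\in\R^n$ I would estimate
$$|f(x)-f(y)|=|P_tf(x)-P_tf(y)|=\Bigl|\int f\,d\bigl(P_t(x,\cdot)-P_t(y,\cdot)\bigr)\Bigr|\le \|f\|_\infty\,\|P_t(x,\cdot)-P_t(y,\cdot)\|_{\var},$$
and then let $t\to\infty$. By the successful coupling just established the right-hand side vanishes, forcing $f(x)=f(y)$; as $x,y$ are arbitrary, $f$ is constant.

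The one genuinely delicate point — and the step I expect to be the main obstacle — is the justification of $P_tf=f$ for a merely bounded harmonic $f$. Formally one has $\frac{d}{dt}P_tf=P_tLf=0$, but making this rigorous for the \emph{non-local} generator $L$ of the Ornstein--Uhlenbeck process requires care about the domain of $L$ and the precise meaning of harmonicity; the clean route is to verify that $(f(X^x_t))_{t\ge0}$ is a bounded martingale, whence $P_tf(x)=\Ee f(X^x_t)=f(x)$. This is precisely the general implication ``successful coupling $\Rightarrow$ Liouville property'' recorded in \cite{CG,CW}, so in the write-up I would either reproduce the short martingale argument or simply invoke that reference. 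All the substantive analysis having already been absorbed into Theorem~\ref{th1}, I anticipate no real difficulty beyond this functional-analytic bookkeeping.
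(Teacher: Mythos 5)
Your proposal is correct and follows exactly the paper's route: the successful coupling is read off by letting $t\to\infty$ in the bound \eqref{th21}, and the Liouville property then follows from the general implication ``successful coupling $\Rightarrow$ Liouville'' that the paper records in Section \ref{section1} with references to \cite{CG,CW} (your martingale sketch of $P_tf=f$ is just the standard argument behind that cited implication). Nothing further is needed.
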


\begin{remark}[The conditions of Theorem \ref{th1} are optimal]\label{remarkth1}
(1) If $A=0$, $d=n$ and $B=\id_{\R^n}$, then $X_t$ is just a L\'{e}vy process on
$\R^n$. The condition \eqref{th2233} is one possibility to guarantee
sufficient jump activity such that the L\'{e}vy process $X_t$ admits
a successful coupling. To see that \eqref{th2233} is sharp, we can
use the example in \cite[Remark 1.2]{SW}.

(2) \rene Let $Z_t$ be a (rotationally symmetric) $\alpha$-stable L\'{e}vy process
$Z_t$, $0<\alpha<2$, and denote by $X_t$ the $n$-dimensional Ornstein-Uhlenbeck process driven
by $Z_t$, \normal i.e.\
$$
    dX_t = AX_t\,dt + dZ_t.
$$
If at least one eigenvalue of $A$ has positive real part, then $X_t$ does not have the coupling
property. Indeed, according to \rene \cite[Example 3.4 and Theorem 3.5]{PZ}, \normal we know that $X_t$ does not have the Liouville property, i.e.\ there exists a bounded harmonic function which is not constant. According to \cite[Theorem 21.12]{Li} or \cite[Theorem 1 and its second remark]{CG}, $X_t$ does not have the coupling property. This example indicates that the non-positivity of the real parts of the eigenvalues of $A$ is also necessary.
\end{remark}

\begin{remark}[Strong Feller property vs.\ coupling property]
    In \cite[Theorem 4.1 and Corollary 4.2]{SW} we show that L\'{e}vy processes which have the strong Feller property admit the coupling property. A similar conclusion, however, does not hold for general Ornstein-Uhlenbeck processes. Consider, for instance, the one-dimensional Ornstein-Uhlenbeck process given by
    $$
        dX_t=X_t\,dt+dZ_t,\qquad X_0=x\in\R,
    $$
    where $Z_t$ is an $\alpha$-stable L\'{e}vy process $Z_t$ on $\R$. According to \cite[Theorem 1.1]{PZ2} (or \cite[Theorem A]{NS}) and \cite[Proposition 2.1]{PZ2}, we know that $X_t$ has the strong Feller property. However, the argument used in Remark \ref{remarkth1} shows that this process fails to have the coupling property.
\end{remark}

Recently, F.-Y.\ Wang \cite{wang1} has studied the coupling property
of an Ornstein-Uhlenbeck process $X_t$ defined by \eqref{ou1}.
Assume that $\Rank(B)=n$ and $\scalp{Ax}{x}\le 0$ holds for
$x\in\R^n$. In \cite[Theorem 3.1]{wang1} it is proved that \eqref
{th21} is satisfied for some constant $C>0$, whenever the L\'{e}vy
measure of $Z_t$ satisfies $\nu(dz)\ge \rho_0(z)dz$ such that
\begin{equation}\label{wang22}
    \int_{\{|z-z_0|\le \varepsilon\}}
    \frac{dz}{\rho_0(z)}<\infty
\end{equation}
holds for some $z_0\in\R^d$ and some $\varepsilon>0$.

Let us compare F.-Y.\ Wang's result with our Theorem \ref{th1}.

\begin{proposition}\label{improvement}
    Assume that \eqref{wang22} holds for some $\rho_0\in L^1_{\mathrm{loc}}(\R^d\setminus\{0\})$, some $z_0\in\R^d$ and some $\varepsilon>0$. Then, there exist a closed subset $F\subset \overline{B(z_0,\varepsilon)}=\{z\in\R^d: |z-z_0|\le \varepsilon\}$ and a constant $\delta>0$ such that
    $$
        \inf_{x\in\R^d, |x|\le \delta}\int_F\big(\rho_0(z)\wedge\rho_0(z-x)\big)\,dz>0.
    $$
\end{proposition}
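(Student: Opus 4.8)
The plan is to extract from \eqref{wang22} the single quantitative fact that $\rho_0$ is bounded below by a positive constant outside a set of small Lebesgue measure, and then to transfer this to the shifted density by controlling the measure of an overlap. Write $V:=\Leb\bigl(\overline{B(z_0,\varepsilon)}\bigr)$ and $M:=\int_{\overline{B(z_0,\varepsilon)}}\rho_0(z)^{-1}\,dz$, which is finite by hypothesis. Applying Chebyshev's inequality to the nonnegative function $1/\rho_0$, I would first show that for every $c>0$ the superlevel set $E_c:=\{z\in\overline{B(z_0,\varepsilon)}:\rho_0(z)>c\}$ satisfies $\Leb\bigl(\overline{B(z_0,\varepsilon)}\setminus E_c\bigr)\le cM$, hence $\Leb(E_c)\ge V-cM$. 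In particular $\Leb(E_c)\to V$ as $c\to0^+$, so I can fix a small $c>0$ for which $\Leb(E_c)\ge V/2>0$.

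Since $E_c$ is only measurable, I would next invoke the inner regularity of Lebesgue measure to choose a compact set $F\subseteq E_c\subseteq\overline{B(z_0,\varepsilon)}$ with $\Leb(F)\ge\tfrac12\Leb(E_c)>0$; this $F$ is the closed set demanded by the statement. On $F$ we have $\rho_0\ge c$, and whenever the shifted point $z-x$ also lies in $E_c$ we have $\rho_0(z-x)\ge c$, so that $\rho_0(z)\wedge\rho_0(z-x)\ge c$ there. Because $F\subseteq E_c$ forces $F+x\subseteq E_c+x$, and the integrand is nonnegative, this gives
\begin{equation*}
    \int_F\bigl(\rho_0(z)\wedge\rho_0(z-x)\bigr)\,dz\ge c\,\Leb\bigl(F\cap(E_c+x)\bigr)\ge c\,\Leb\bigl(F\cap(F+x)\bigr),
\end{equation*}
valid for every $x\in\R^d$.

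It then remains to bound $\Leb\bigl(F\cap(F+x)\bigr)$ away from zero uniformly for small $|x|$. Using the identity $\Leb\bigl(F\cap(F+x)\bigr)=\Leb(F)-\tfrac12\Leb\bigl(F\,\triangle\,(F+x)\bigr)$ together with the continuity of translations in $L^1$, namely $\Leb\bigl(F\,\triangle\,(F+x)\bigr)=\|\I_F-\I_{F+x}\|_{L^1}\to0$ as $x\to0$, I would pick $\delta>0$ so small that $\Leb\bigl(F\,\triangle\,(F+x)\bigr)\le\Leb(F)$ for all $|x|\le\delta$, whence $\Leb\bigl(F\cap(F+x)\bigr)\ge\tfrac12\Leb(F)$. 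Combining this with the previous display yields $\int_F\bigl(\rho_0(z)\wedge\rho_0(z-x)\bigr)\,dz\ge\tfrac{c}{2}\Leb(F)>0$ uniformly in $|x|\le\delta$, which is the assertion. The one genuine difficulty is that $\rho_0$ is merely measurable, so there is no pointwise handle on $\rho_0(z-x)$ near a given point; the device that circumvents this is to pass to the superlevel set $E_c$ and reduce everything to the overlap measure $\Leb\bigl(F\cap(F+x)\bigr)$, whose smallness under small translations is precisely the $L^1$-continuity statement above. Selecting $F$ closed via inner regularity is the remaining routine point.
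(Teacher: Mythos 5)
Your proof is correct, but it follows a genuinely different route from the paper's. The paper works with $\rho_0$ itself: it chooses a closed $F$ with $0\notin F$ and $\int_F dz/\rho_0<\infty$, gets $K:=\int_F\rho_0\,dz>0$ from the Cauchy--Schwarz inequality, and then controls $\int_F|\rho_0(z)-\rho_0(z-x)|\,dz$ for small $|x|$ by approximating $\rho_0$ in $L^1$ near $F$ with a smooth compactly supported $\chi$ (this is where $\rho_0\in L^1_{\mathrm{loc}}(\R^d\setminus\{0\})$ and the separation of $F$ from the origin are really needed), finishing with the identity $2(a\wedge b)=a+b-|a-b|$. You instead convert \eqref{wang22} into a statement about superlevel sets via Chebyshev applied to $1/\rho_0$, pass to a compact $F\subseteq E_c$ by inner regularity, and reduce the whole problem to the overlap measure $\Leb\bigl(F\cap(F+x)\bigr)$, handled by translation-continuity of $\I_F$ in $L^1$. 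Both proofs ultimately rest on $L^1$-continuity of translations, but applied to different objects ($\rho_0$ via a smooth approximant versus the indicator $\I_F$), and the quantitative input from \eqref{wang22} is extracted differently (Cauchy--Schwarz on $\rho_0$ versus Markov/Chebyshev on $1/\rho_0$). Your version buys something real: it never integrates $\rho_0$ itself, so it uses only the measurability of $\rho_0$ together with \eqref{wang22} --- the local integrability hypothesis and the care taken in the paper to keep $F$ (and a $\delta_0$-neighbourhood of it) away from the origin become unnecessary. What the paper's argument buys in exchange is flexibility in the choice of $F$: any closed set with $\int_F dz/\rho_0<\infty$ avoiding the origin works, and the resulting lower bound $K/8$ is expressed through the mass $\int_F\rho_0\,dz$ rather than through a level $c$ times $\Leb(F)$; but for the purpose of Proposition \ref{improvement} the two conclusions are equally strong. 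One small point of bookkeeping in your write-up: your $F$ is compact, hence closed, and contained in $\overline{B(z_0,\varepsilon)}$, so it satisfies all requirements of the statement; and your overlap identity $\Leb\bigl(F\cap(F+x)\bigr)=\Leb(F)-\tfrac12\Leb\bigl(F\,\triangle\,(F+x)\bigr)$ is justified by translation invariance of $\Leb$, so no gap there.
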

    We postpone the technical proof of Proposition \ref{improvement} to Section \ref{subsec-appendix2} in the appendix. Proposition \ref{improvement} shows
    that Theorem \ref{th1} improves \cite[Theorem 3.1]{wang1}, even if the L\'{e}vy measure $\nu$ of $Z_t$ has an absolutely continuous component as we will see in the following example.

\begin{example}
    Let $C_{3/4}$ be a Smith-Volterra-Cantor set in $[0,1]$ with Lebesgue measure $\Leb(C_{3/4})=3/4$, i.e.\ $C_{3/4}$ is a perfect set with empty interior, see e.g.\ \cite[Chapter 3, Section 18]{AB}. Consider the following one-dimensional Ornstein-Uhlenbeck process
    $$
        dX_t=-X_t\,dt + dZ_t,\qquad X_0=x\in\R,
    $$
    where $Z_t$ is a real-valued L\'{e}vy process with L\'{e}vy measure $\nu(dz)=\I_{C_{3/4}}(z)\,dz$. We will see that we can use Theorem \ref{th1} to show the coupling property of the process $X_t$ while the criterion from \cite[Theorem 3.1]{wang1} fails.

    Let $\delta \in (0, 1/8)$ and $z\in [-\delta, \delta]$. Then
    \begin{align*}
        \nu _\varepsilon\wedge (\delta_z*\nu_\varepsilon)(\R)
        &=\int\Bigl(\I_{C_{3/4}}(x)\wedge\I_{C_{3/4}}(x+z)\Bigr)dx\\
        &=\Leb\bigl( C_{3/4}\cap (C_{3/4}-z)\bigr)\\
        &=\Leb(C_{3/4}) + \Leb(C_{3/4}-z) - \Leb\bigl( C_{3/4}\cup (C_{3/4}-z)\bigr)\\
        &\geq \frac 64 - \Leb[-|z|,1+|z|]\geq\frac 14.
    \end{align*}
    This shows that the conditions of Theorem \ref{th1} are satisfied.

    On the other hand, since $C_{3/4}$ contains no intervals, we see that for all $z_0\in\R$ and $\varepsilon>0$,
    $$
        \int_{\{|z-z_0|\le \varepsilon\}}\frac{dz}{\I_{C_{3/4}}(z)}=\infty
    $$
    (here we use the convention $\frac 10 = +\infty$). This means that \eqref{wang22} does not hold.
\end{example}

\medskip

Now we are going to estimate $\|P_t(x,\cdot)-P_t(y,\cdot)\|_{\var}$ for large values of $t$ with the help of the characteristic exponent $\Phi(\xi)$ of the L\'evy process $Z_t$.
 We restrict ourselves to the case where $Q=0$ in \eqref{ou2}, i.e.\ to L\'{e}vy process $(Z_t)_{t\ge 0}$ without a Gaussian part. For $t, \rho>0$, define
$$
    \varphi_t(\rho)
    :=\sup_{|\xi|\le \rho}\int_0^t\Re\Phi\big(B^\top e^{sA^\top}\xi\big)\,ds,
$$
where $M^\top$ denotes the transpose of the matrix $M$.
\begin{theorem}\label{coup}
    Let $P_t(x,\cdot)$ be the transition function of the Ornstein-Uhlenbeck process $\{X_t^x\}_{t\ge0}$ on $\R^n$ given by \eqref{ou1}. Assume that there exists some $t_0 > 0$ such that
    \begin{equation}\label{coup1}
       \liminf\limits_{|\xi|\rightarrow\infty}\frac{\int_0^{t_0}\Re\Phi\big(B^\top e^{sA^\top}\xi\big)\,ds}{\log (1+|\xi|)} >2n+2.
\end{equation}
If
\begin{equation}\label{coup2}
    \int \exp\left(-\int_0^t\Re\Phi\big(B^\top e^{sA^\top}\xi\big)ds\right) |\xi|^{n+2} \,d\xi
    = \mathsf{O}\left(\varphi_t^{-1}(1)^{2n+2}\right)
    \quad\text{as\ \ } t\to\infty,
\end{equation}
    then there exist $t_1,C>0$ such that for any $x,y\in\R^n$ and $t\ge t_1$,
\begin{equation}\label{coup3}
    \|P_t(x,\cdot)-P_t(y,\cdot)\|_{\var}
    \le C|e^{tA}(x-y)|\,\varphi^{-1}_t(1).
\end{equation}
In particular, when \begin{equation}\label{coup4}\xi\mapsto\int_0^\infty\Re\Phi\big(B^\top
e^{sA^\top}\xi\big)\,ds \quad\textrm{ is locally bounded},\end{equation} we only
need the condition \eqref{coup1} to get \eqref{coup3}.
\end{theorem}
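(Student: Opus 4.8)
The plan is to exploit the convolution structure of the Ornstein--Uhlenbeck semigroup together with Fourier inversion. Writing $W_t=\int_0^t e^{(t-s)A}B\,dZ_s$ and letting $\mu_t$ be its law, \eqref{ou1} gives $X_t^x=e^{tA}x+W_t$, so $P_t(x,\cdot)=\delta_{e^{tA}x}*\mu_t$ and, with $h:=e^{tA}(x-y)$,
\begin{equation*}
\|P_t(x,\cdot)-P_t(y,\cdot)\|_{\var}=\|\mu_t-\delta_h*\mu_t\|_{\var}.
\end{equation*}
Computing the characteristic function of the stochastic integral gives $\widehat{\mu_t}(\xi)=\exp\bigl(-\int_0^t\Phi(B^\top e^{sA^\top}\xi)\,ds\bigr)$, so $|\widehat{\mu_t}(\xi)|=e^{-g_t(\xi)}$ with $g_t(\xi):=\int_0^t\Re\Phi(B^\top e^{sA^\top}\xi)\,ds$ and $\varphi_t(\rho)=\sup_{|\xi|\le\rho}g_t(\xi)$. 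Since $\Re\Phi\ge0$, the map $t\mapsto g_t$ is nondecreasing, and \eqref{coup1} produces $\kappa>2n+2$ with $e^{-g_t(\xi)}\le e^{-g_{t_0}(\xi)}\le C(1+|\xi|)^{-\kappa}$ for $t\ge t_0$. Hence $\xi\mapsto(1+|\xi|^{n+2})\widehat{\mu_t}(\xi)$ is integrable, $\mu_t$ has a density $p_t\in C^{n+2}(\R^n)$, and by the fundamental theorem of calculus $\|\mu_t-\delta_h*\mu_t\|_{\var}=\|p_t-p_t(\cdot-h)\|_{L^1}\le|h|\,\|\nabla p_t\|_{L^1}$. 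This reduces \eqref{coup3} to the gradient estimate $\|\nabla p_t\|_{L^1}\le C\,\varphi_t^{-1}(1)$ for $t\ge t_1$.

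Next I would rescale to the natural length scale $r_t:=\varphi_t^{-1}(1)$. Put $\widetilde p_t(w):=r_t^{-n}p_t(r_t^{-1}w)$, the density of $r_tW_t$, so that $\widehat{\widetilde p_t}(\xi)=\widehat{\mu_t}(r_t\xi)$ and, by the scaling of the $L^1$-norm of a gradient, $\|\nabla p_t\|_{L^1}=r_t\,\|\nabla\widetilde p_t\|_{L^1}$. It therefore suffices to prove the \emph{scale-invariant} bound $\|\nabla\widetilde p_t\|_{L^1}\le C$, uniformly for $t\ge t_1$. By the definition of $\varphi_t$ and of its generalised inverse, $g_t(r_t\xi)\le\varphi_t(r_t)\le1$ for $|\xi|\le1$, while the substitution $\eta=r_t\xi$ turns \eqref{coup2} into
\begin{equation*}
\int_{\R^n}|\widehat{\widetilde p_t}(\xi)|\,|\xi|^{n+2}\,d\xi=r_t^{-(2n+2)}\int_{\R^n}e^{-g_t(\eta)}|\eta|^{n+2}\,d\eta=\mathsf{O}(1).
\end{equation*}
Combined with the integrability from the first step this gives $\int|\widehat{\widetilde p_t}(\xi)|(1+|\xi|^{n+2})\,d\xi\le C$ uniformly in $t$; this reformulation is exactly what explains the exponents $n+2$ and $2n+2$ in \eqref{coup1}--\eqref{coup2}.

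The core estimate is then obtained by splitting $\R^n=\{|w|\le1\}\cup\{|w|>1\}$. On the bounded part, Fourier inversion gives $\|\nabla\widetilde p_t\|_{L^\infty}\le C\int|\xi|\,|\widehat{\widetilde p_t}(\xi)|\,d\xi\le C$, hence $\int_{|w|\le1}|\nabla\widetilde p_t|\le C$. On the complement one needs decay: integrating by parts in the inversion formula, for every multi-index $\beta$ with $|\beta|=n+1$ one has $|w^\beta\partial_j\widetilde p_t(w)|\le C\int|\partial_\xi^\beta(\xi_j\widehat{\widetilde p_t}(\xi))|\,d\xi$, whence $\int_{|w|>1}|\nabla\widetilde p_t|\le C\,\sup_w|w|^{n+1}|\nabla\widetilde p_t(w)|\int_{|w|>1}|w|^{-(n+1)}\,dw$, where the last integral is finite. \emph{The main obstacle is to bound the supremum on the right uniformly in $t$.} Applying the Fa\`a di Bruno and Leibniz rules to $\widehat{\widetilde p_t}(\xi)=e^{-\psi_t(r_t\xi)}$, where $\psi_t(\eta)=\int_0^t\Phi(B^\top e^{sA^\top}\eta)\,ds$, each derivative produces $e^{-g_t(r_t\xi)}$ times a polynomial in the derivatives $\partial^\gamma\psi_t(r_t\xi)$ with $|\gamma|\le n+1$; the chain-rule factors $r_t^{|\gamma|}$ cancel against the rescaling $\eta=r_t\xi$, leaving a $t$-uniform integral to control. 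To dominate it by the uniform bound above, I would establish growth estimates for the derivatives of the L\'evy symbol, e.g.\ of the form $|\partial^\gamma\Phi(\eta)|\le C_\gamma(1+\Re\Phi(\eta))$, read off from the L\'evy--Khintchine representation \eqref{ou2} with $Q=0$, together with bounds on $\|B^\top e^{sA^\top}\|$ to pass the derivatives through the linear map and to keep the time integral under control uniformly in $t$; the upshot is that the polynomial, weighted by $e^{-g_t(r_t\xi)}$, is dominated by a constant multiple of $(1+|\xi|^{n+2})e^{-g_t(r_t\xi)}$, making the far-field integral $\mathsf{O}(1)$. This symbol-derivative bookkeeping, and not the splitting itself, is the delicate part. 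Granting it, $\|\nabla\widetilde p_t\|_{L^1}\le C$, so $\|\nabla p_t\|_{L^1}\le C r_t$ and \eqref{coup3} follows.

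Finally, for the supplementary statement I would show that \eqref{coup4} forces \eqref{coup2}. If $g_\infty(\xi):=\int_0^\infty\Re\Phi(B^\top e^{sA^\top}\xi)\,ds$ is locally bounded, then $g_t\le g_\infty$ and $g_\infty(0)=0$ give $\varphi_t(\rho)\le\sup_{|\xi|\le\rho}g_\infty(\xi)\to0$ as $\rho\downarrow0$, so there is $\rho_1>0$ with $\varphi_t(\rho_1)\le1$ for all $t$; hence $r_t=\varphi_t^{-1}(1)\ge\rho_1>0$ is bounded away from $0$. Consequently $r_t^{2n+2}\ge\rho_1^{2n+2}$, while by monotonicity and \eqref{coup1} the integral $\int e^{-g_t}|\xi|^{n+2}\,d\xi\le\int e^{-g_{t_0}}|\xi|^{n+2}\,d\xi<\infty$ stays bounded uniformly in $t$. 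Thus \eqref{coup2} holds automatically, and \eqref{coup3} follows from the first part.
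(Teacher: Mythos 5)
Your opening moves are sound and in fact parallel the paper's: writing $\|P_t(x,\cdot)-P_t(y,\cdot)\|_{\var}=\|\mu_t-\delta_h*\mu_t\|_{\var}$ with $h=e^{tA}(x-y)$, reducing \eqref{coup3} to an $L^1$-gradient bound for a density at the scale $r_t=\varphi_t^{-1}(1)$, and your derivation of the supplementary claim (that \eqref{coup4} and monotonicity of $t\mapsto\varphi_t^{-1}(1)$ yield \eqref{coup2}) is essentially the paper's Step 4. However, the step you yourself flag as delicate is a genuine gap, and it cannot be repaired in the form you propose. Your far-field estimate requires $\partial_\xi^\beta\bigl(\xi_j\widehat{\widetilde p_t}(\xi)\bigr)$ for $|\beta|=n+1$, i.e.\ derivatives of the \emph{full} symbol $\Phi$ (equivalently of $\Phi_t$, whose L\'evy measure is $\nu_t(C)=\int_0^t\nu(e^{-sA}C)\,ds$) up to order $n+1$. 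The theorem imposes no moment condition on the large jumps of $\nu$, and these derivatives need not exist: already $\partial_j\Phi$ contains the term $\int_{|z|>1}z_je^{i\scalp{\xi}{z}}\,\nu(dz)$, which diverges whenever $\int_{|z|>1}|z|\,\nu(dz)=\infty$; adding a lacunary atomic part such as $\sum_{k\ge1}k^{-2}\delta_{2^ke_1}$ to a small-jump measure produces a symbol satisfying \eqref{coup1}--\eqref{coup2} that is nowhere differentiable. Moreover, the inequality you hope to ``read off'' from \eqref{ou2}, namely $|\partial^\gamma\Phi(\eta)|\le C_\gamma(1+\Re\Phi(\eta))$, is false even for smooth symbols: for the rotationally symmetric stable symbol $\Phi(\eta)=|\eta|^\alpha$ the second derivatives grow like $|\eta|^{\alpha-2}$ as $\eta\to0$, while the right-hand side stays bounded there.

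What is missing is exactly the device the paper uses: a \emph{time-dependent small-jump/large-jump factorization}. With $h(t):=1/\varphi_t^{-1}(1)$ one writes $\mu_t=\mu_t^{h(t)}*\mmu_t^{h(t)}$, where $\mu_t^{h(t)}$ has the fully compensated truncated symbol $\Phi_{t,h(t)}(\xi)=\int_{|z|\le h(t)}\bigl(1-e^{i\scalp{\xi}{z}}+i\scalp{\xi}{z}\bigr)\,\nu_t(dz)$ and $\mmu_t^{h(t)}$ carries the large jumps and the drift. The truncated L\'evy measure has moments of every order, so $\Phi_{t,h(t)}$ is $C^\infty$ and its derivatives are controlled by $\int_{|z|\le h(t)}|z|^k\,\nu_t(dz)$, which in turn is controlled through $\varphi_t$ evaluated at $1/h(t)$; this is precisely what makes the Fa\`a di Bruno bookkeeping you describe go through, and it is carried out in \cite[Proposition 2.2]{SSW}, yielding for $t\ge t_1$ a density $p_t^{h(t)}\in C_b^{n+2}(\R^d)$ with $|\nabla p_t^{h(t)}(y)|\le c\,h(t)^{-(n+1)}\bigl(1+h(t)^{-1}|y|\bigr)^{-(n+1)}$ and hence $\|\nabla p_t^{h(t)}\|_{L^1}\le c\,\varphi_t^{-1}(1)$. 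Since convolving with the (possibly very irregular) factor $\mmu_t^{h(t)}$ does not increase the total variation distance between shifted measures, one gets $\|P_t(x,\cdot)-P_t(y,\cdot)\|_{\var}\le\int\bigl|p_t^{h(t)}(z-e^{tA}x)-p_t^{h(t)}(z-e^{tA}y)\bigr|\,dz$, after which your fundamental-theorem-of-calculus step finishes the proof. In short: your Fourier-inversion scheme is fine when applied to the truncated factor, but applied to $\mu_t$ itself it is unprovable under the stated hypotheses; the factorization is not a technical convenience but the key idea.
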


Note that \eqref{coup4} is, e.g.\ satisfied, if the real parts of
all eigenvalues of $A$ are negative and
$$\limsup_{|\xi|\rightarrow0}\frac{\Re\Phi\big(B^\top
\xi\big)}{|\xi|^\kappa}<\infty$$ for some constant $\kappa>0$.

\bigskip

The remaining part of this paper is organized as follows. In Section \ref{section2} we first present the proof of Theorem \ref{th1}, where a coupling for the weighted sum of independent random variables and the explicit expression of the transition semigroup of Ornstein-Uhlenbeck processes driven by a compound Poisson process are used. Then, we follow the approach of our recent paper \cite{SSW} to prove Theorem \ref{coup}. As a byproduct, we also derive explicit gradient estimates for Ornstein-Uhlenbeck processes, cf.\ the Appendix \ref{subsec-appendix1}.

\section{Proofs of Theorems}\label{section2}
We begin with the proof of Theorem \ref{th1}.

\begin{proof}[Proof of Theorem \ref{th1}]
The proof is split into six steps. 

\medskip
\noindent
\emph{Step 1}.
For any $\varepsilon>0$, let $(Z_t^\varepsilon)_{t\ge0}$ be a compound Poisson process on $\R^d$ whose L\'{e}vy measure is $\nu_\varepsilon$. Then, $(Z_t^\varepsilon)_{t\ge0}$ and $(Z_t-Z_t^\varepsilon)_{t\ge0}$ are independent L\'{e}vy processes. It follows, in particular, that the random variables
$$
    X_t^{\varepsilon,x}:=e^{tA}x+\int_0^t e^{(t-s)A}B\,dZ_s^\varepsilon
$$
and
$$
    X_t^x-X_t^{\varepsilon,x}:=\int_0^t e^{(t-s)A}B\,d(Z_s-Z_s^\varepsilon)
$$
are independent for any $\varepsilon>0$ and $t\ge0$.

\medskip
\noindent
\emph{Step 2}.
Denote by $\mu_{\varepsilon,t}$ the law of random variable
$$
    X_t^{\varepsilon,0} := X_t^{\varepsilon,x}-e^{tA}x = \int_0^te^{(t-s)A}B\,dZ_s^\varepsilon.
$$
We will compute $\mu_{\varepsilon,t}$, which coincides with the law
of $\int_0^te^{sA}B\,dZ_s^\varepsilon$, cf.\ Lemma
\ref{lemmaproofth11} below. Our argument follows the proof of
\cite[Theorem 1.1]{PZ2}, which is motivated by \cite[Theorem
27.7]{SA}.

The law of the compound poisson process $Z_t^\varepsilon$ is given by
$$
    e^{-C_\varepsilon t}\bigg[\delta_0+\sum_{k=1}^\infty \frac{(C_\varepsilon t)^k}{k!}\,\bar{\nu}_\varepsilon^{*k}\bigg],
$$
where $C_\varepsilon=\nu_\varepsilon(\R^d)$, $\bar{\nu}_\varepsilon=\nu_\varepsilon/C_\varepsilon$ and
$\bar{\nu}_\varepsilon^{*k}$ is the $k$-fold convolution of $\bar{\nu}_\varepsilon$.

Construct a sequence $(\xi_i)_{i\ge1}$ of iid random variables which are exponentially distributed with intensity $C_\varepsilon$, and introduce a further sequence $(U_i)_{i\ge1}$ of iid random variables on $\R^d$ with law $\bar{\nu}_\varepsilon$. We will assume that the random variables $(U_i)_{i\ge1}$ are independent of the sequence $(\xi_i)_{i\ge1}$. It is not difficult to check that the random variable
\begin{equation}\label{proofs0}
    0\cdot\I_{\{\xi_1>t\}} +\sum_{k=1}^\infty\I_{\{\xi_1+\cdots+\xi_k\le t<\xi_1+\cdots+\xi_{k+1}\}} \Big(e^{\xi_1A}BU_1+\cdots+e^{(\xi_1+\cdots+\xi_k)A}BU_k\Big)
\end{equation}
also has the probability distribution $\mu_{\varepsilon,t}$.

Using \eqref{proofs0} we find for any $f\in B_b(\R^n)$,
\begin{equation}\label{proofs1}
    \Ee f\bigl(X_t^{\varepsilon,x}\bigr)
    =\int f\bigl(e^{tA }x+z\bigr)\,\mu_{\varepsilon,t}(dz)
    =f\bigl(e^{tA}x\bigr)\,e^{-C_\varepsilon t}+Hf(x),
\end{equation}
where
\begin{align*}
&Hf(x)\\
&:=\Ee f\left(\sum_{k=1}^\infty\I_{\{\xi_1+\cdots+\xi_k\le t<\xi_1+\cdots+\xi_{k+1}\}} \Bigl(e^{tA}x+e^{\xi_1A}BU_1+\cdots+e^{(\xi_1+\cdots+\xi_k)A}BU_k\Bigr)\right)\\
&=\sum_{k=1}^\infty\Ee f\left(\I_{\{\xi_1+\cdots+\xi_k\le t<\xi_1+\cdots+\xi_{k+1}\}} \Bigl(e^{tA}x+e^{\xi_1A}BU_1+\cdots+e^{(\xi_1+\cdots+\xi_k)A}BU_k\Bigr)\right)\\
&=\sum_{k=1}^\infty \mathop{\int\cdots\int}\limits_{t_1+\cdots+t_k\le t<t_1+\cdots+t_{k+1}} C_\varepsilon^{k+1}e^{-C_\varepsilon(t_1+\cdots+t_{k+1})}\,dt_1\cdots dt_{k+1}{\;\;\times}\\
&\qquad\quad{\times}\;\int\limits_{\R^d}\cdots\int\limits_{\R^d} f\bigl(e^{tA}x+e^{t_1A}By_1+\cdots+e^{(t_1+\cdots+t_k)A}By_k\bigr) \,\bar{\nu}_\varepsilon(dy_1)\cdots \bar{\nu}_\varepsilon(dy_k)\\
&=\sum_{k=1}^\infty \mathop{\int\cdots\int}\limits_{t_1+\cdots+t_k\le t<t_1+\cdots+t_{k+1}} C_\varepsilon^{k+1}e^{-C_\varepsilon(t_1+\cdots+t_{k+1})}\,dt_1\cdots dt_{k+1}\;\;\times\\
&\qquad\quad\times\;\int\limits_{\R^n} f\bigl(e^{tA}x+z\bigr)\,\mu_{t_1,\cdots,t_k}(dz).
\end{align*}
Here $\mu_{t_1,\cdots,t_k}$ is the probability measure on $\R^n$ which is the image of the $k$-fold product measure
$\bar{\nu}_\varepsilon\times \cdots\times \bar{\nu}_\varepsilon$ under the linear transformation $J_{t_1,\ldots,t_k}$
(independent of $\varepsilon$) acting from $(\R^{d})^k$ into $\R^n$:
$$
    J_{t_1,\ldots,t_k}(y_1,\ldots,y_k)
    =e^{t_1A}B y_1 + \cdots +e^{(t_1+\cdots+t_k)A}B y_k,
$$
for $y_i\in\R^d$ and $i=1,\cdots,k$.

\medskip
\noindent \emph{Step 3}. Let $P_t(x,\cdot)$ and $P_t$ be the
transition function and the transition semigroup of the
Ornstein-Uhlenbeck process $(X^x_t)_{t\ge0}$. Similarly, we denote
by  $P^\varepsilon_t(x,\cdot)$ and $P^\varepsilon_t$ the transition
function and the transition semigroup of
$(X_t^{\varepsilon,x})_{t\ge0}$, and by $Q^\varepsilon_t(x,\cdot)$
and $Q^\varepsilon_t$ the transition function and the transition
semigroup of $(X_t^x-X_t^{\varepsilon,x})_{t\ge0}$. By the
independence of the processes $(X_t^{\varepsilon,x})_{t\ge0}$ and
$(X_t^x-X_t^{\varepsilon,x})_{t\ge0}$, we get
\begin{equation}\label{proofs2}\begin{aligned}
    \|P_t(x,\cdot)-P_t(y,\cdot)\|_{\var}
    &= \sup_{\|f\|_\infty\le 1}\big|P_tf(x)-P_tf(y)\big|\\
    &=\sup_{\|f\|_\infty\le 1} \big|P_t^\varepsilon Q_t^\varepsilon f(x)-P_t^\varepsilon Q_t^\varepsilon f(y)\big|\\
    &\le \sup_{\|h\|_\infty\le 1} \big|P^\varepsilon_th(x)-P^\varepsilon_th(y)\big|.
\end{aligned}\end{equation}
Furthermore, it follows from \eqref{proofs1} that
\begin{equation}\label{proofs3}\begin{aligned}
   &\sup_{\|h\|_\infty\le 1} \big|P^\varepsilon_th(x)-P^\varepsilon_th(y)\big|\\
    &\le 2e^{-C_\varepsilon t}+\sum_{k=1}^\infty \mathop{\int\cdots\int}\limits_{t_1+\cdots+t_k\le t<t_1+\cdots+t_{k+1}} C_\varepsilon^{k+1}e^{-C_\varepsilon(t_1+\cdots+t_{k+1})}\,dt_1\cdots dt_{k+1}\;\;\times\\
    &\qquad\times\sup_{\|h\|_\infty\le 1} \bigg|\int\limits_{\R^n} h\big(e^{tA}x+z\big)\,\mu_{t_1,\cdots,t_k}(dz) - \int\limits_{\R^n} h\big(e^{tA}y+z\big)\,\mu_{t_1,\cdots,t_k}(dz)\biggr|\\
    &= 2e^{-C_\varepsilon t}+\sum_{k=1}^\infty \mathop{\int\cdots\int}\limits_{t_1+\cdots+t_k\le t<t_1+\cdots+t_{k+1}} C_\varepsilon^{k+1}e^{-C_\varepsilon(t_1+\cdots+t_{k+1})}\,dt_1\cdots dt_{k+1}\;\;\times\\
    &\qquad\times\sup_{\|h\|_\infty\le 1} \bigg|\int\limits_{\R^n} h\big(e^{tA}(x-y)+z\big)\,\mu_{t_1,\cdots,t_k}(dz) - \int\limits_{\R^n} h(z)\,\mu_{t_1,\cdots,t_k}(dz)\bigg|\\
    &\le 2e^{-C_\varepsilon t}+\sum_{k=1}^\infty\mathop{\int\cdots\int}\limits_{t_1+\cdots+t_k\le t<t_1+\cdots+t_{k+1}} C_\varepsilon^{k+1}e^{-C_\varepsilon(t_1+\cdots+t_{k+1})}\,dt_1\cdots dt_{k+1}\;\;\times\\
&\qquad\times\|\delta_{e^{tA}(x-y)}*\mu_{t_1,\cdots,t_k}-\mu_{t_1,\cdots,t_k}\|_{\var}.
\end{aligned}\end{equation}

\medskip
\noindent
\emph{Step 4}.
For any $a\in\R^n$, $a\neq 0$, let $R_a$ be the non-degenerate rotation such that $R_a a=|a|e_1$. Then, by \cite[Lemma 3.2]{SW},
\begin{align*}
    \big\|\delta_{e^{tA}(x-y)}*\mu_{t_1,\cdots,t_k} &-\mu_{t_1,\cdots,t_k}\big\|_{\var}\\
    &=\big\|\delta_{|e^{tA}(x-y)|e_1}*\big(\mu_{t_1,\cdots,t_k}\circ R_{e^{tA}(x-y)}^{-1}\big)-\mu_{t_1,\cdots,t_k}\circ R_{e^{tA}(x-y)}^{-1}\big\|_{\var}.
\end{align*}
Since
$\mu_{t_1,\cdots,t_k}$ is the law of the random variable
$$
    \sum_{i=1}^ke^{(t_1+\cdots+t_i)A}BU_i,
$$
$\mu_{t_1,\cdots,t_k}\circ R_{e^{tA}(x-y)}^{-1}$ is the law of the random variable
$$
    \sum_{i=1}^kR_{e^{tA}(x-y)}\big(e^{(t_1+\cdots+t_i)A}BU_i\big).
$$

To estimate $\big\|\delta_{|e^{tA}(x-y)|e_1}*\big(\mu_{t_1,\cdots,t_k}\circ R_{e^{tA}(x-y)}^{-1}\big)-\mu_{t_1,\cdots,t_k}\circ R_{e^{tA}(x-y)}^{-1}\big\|_{\var}$, we will use the Mineka and Lindvall-Rogers couplings for random walks. The remainder of this part is based on the proof of \cite[Proposition 3.3]{SW}. In order to ease notations, we set $\nnu:=\bar{\nu}_\varepsilon$ and $\nnu^{a}:=\delta_a*\bar\nu_\varepsilon$ for any $a\in\R^d$.

Since $\Rank(B)=n$, there exists a real $d\times n$ matrix $\bar{B}$ such that $B\bar{B}=\id_{\R^n}$, see e.g.\ \cite[Theorem 2.6.1, Page 35]{Bern}.  For any $i\ge 1$, let $(U_i,\Delta U_i)\in \R^d \times \R^d$ be a pair of random variables with the following distribution
$$
    \Pp\big((U_i,\Delta U_i)\in C\times D\big)
    =
    \begin{cases}
    \qquad \frac 12 (\nnu\wedge\nnu^{-a_i})(C), & \text{if\ \ } D=\{a_i\};\\
    \qquad \frac 12 (\nnu\wedge\nnu^{a_i})(C),  & \text{if\ \ } D= \{-a_i\};\\
    \big(\nnu- \frac 12 (\nnu\wedge\nnu^{-a_i}+\nnu\wedge\nnu^{a_i})\big)(C), & \text{if\ \ } D=\{0\};
    \end{cases}
$$
where $C\in\Bb(\R^d)$, $a_i=\bar B\, e^{(t-(t_1+\cdots+t_i))A}\,(x-y)$ and $D$ is any of the following three sets: $\{-a_i\}$, $\{0\}$ or $\{a_i\}$.
Again by \cite[Lemma 3.2]{SW},
\begin{align*}
    \Pp\big(\Delta U_i=-a_i\big)
    &=\frac{1}{2}\big(\nnu\wedge\big(\delta_{a_i}*\nnu)\big)(\R^d)\\
    &=\frac{1}{2}\big(\nnu\wedge\big(\delta_{-a_i}*\nnu)\big)(\R^d)\\
    &=\Pp(\Delta U_i=a_i).
\end{align*}
It is clear that the distribution of $U_i$ is $\nnu$. Let $U_i'=U_i+\Delta U_i$. We claim that the distribution of $U_i'$ is
also $\nnu$. Indeed, for any $C\in\mathscr{B}(\R^d)$,
\begin{align*}
    &\Pp(U_i'\in C)\\
    &=\Pp(U_i-a_i\in C, \Delta U_i=-a_i)
        + \Pp(U_i+a_i\in C, \Delta U_i=a_i)
        +\Pp(U_i \in A, \Delta U_i=0)\\
    &= \frac 12\left(\delta_{-a_i}*(\nnu\wedge\nnu^{a_i})\right)(C)
        +\! \frac12\left(\delta_{a_i}*(\nnu\wedge\nnu^{-a_i})\right)(C)
       \! +\! \left(\!\!\nnu-\!\! \frac 12\,\big(\nnu\wedge\nnu^{-a_i}+\nnu\wedge\nnu^{a_i}\big)\!\!\right)(C)\\
    &=\nnu(C),
\end{align*}
where we have used that
$$
    \delta_{a_i}*(\nnu\wedge\nnu^{-a_i})=\nnu\wedge\nnu^{a_i}\quad\textrm{ and }\quad\delta_{-a_i}*(\nnu\wedge\nnu^{a_i})=\nnu\wedge\nnu^{- a_i}.
$$

Without loss of generality, we can assume that the pairs $(U_i,U_i')$ are independent for all $i\ge1$.  Now we construct the
coupling
$$
    (S_k,S_k')_{k\ge1}
    =\left(\sum_{i=1}^kR_{e^{tA}(x-y)}\big(e^{(t_1+\cdots+t_i)A}BU_i\big),
        \sum_{i=1}^kR_{e^{tA}(x-y)}\big(e^{(t_1+\cdots+t_i)A}BU'_i\big)\right)_{k\ge1}
$$
of
$$
    S_k:=\sum_{i=1}^kR_{e^{tA}(x-y)}\big(e^{(t_1+\cdots+t_i)A}BU_i\big).
$$
Since $U'_i-U_i=\Delta U_i$ is either $\pm a_i$ or $0$, we know that
\begin{align*}
(S_k-& S_k')_{k\ge1}\\
    &=\left(\sum_{i=1}^kR_{e^{tA}(x-y)}\big(e^{(t_1+\cdots+t_i)A}BU'_i\big) -\sum_{i=1}^kR_{e^{tA}(x-y)}\big(e^{(t_1+\cdots+t_i)A}BU_i\big)\right)_{k\ge1}\\
    &=\left(\sum_{i=1}^kR_{e^{tA}(x-y)}\big(e^{(t_1+\cdots+t_i)A}B(U'_i-U_i)\big)\right)_{k\ge1}
\end{align*}
is a random walk on $\R^n$ whose steps are symmetrically (but not necessarily identically) distributed and take only the values $\pm |e^{tA}(x-y)| e_1$ and $0$.

Set $S^j_{k}=\sum_{i=1}^k\eta^j_{i}$ and $S^{j\,\prime}_{k}=\sum_{i=1}^k\eta^{j\,\prime}_{i}$ for $1\le j\le
n$, where
 $$
    (\eta^{1}_{i},\ldots,\eta^{n}_{i})=R_{e^{tA}(x-y)}\big(e^{(t_1+\cdots+t_i)A}BU_i\big)
    $$ and $$(\eta_{i}^{1\,\prime},\ldots,\eta^{n\,\prime}_{i})=R_{e^{tA}(x-y)}\big(e^{(t_1+\cdots+t_i)A}BU_i^{\prime}\big).
$$
  Then $(S^1_k-S^{1\,\prime}_k)_{k\geq 1}$ is a random walk on $\R$ whose steps are independent and attain the values $-|e^{tA}(x-y)|$, $0$ and $|e^{tA}(x-y)|$ with probabilities $\frac 12(1-p_i)$, $p_i$ and $\frac 12(1-p_i)$, respectively; the values of the $p_i$ are given by
\begin{align*}
    p_i
    :&=\Pp(\eta^{1\,\prime}_i-\eta^1_{i}=0)\\
    &= \left(\nnu- \tfrac 12 (\nnu\wedge\nnu^{-a_i}+\nnu\wedge\nnu^{a_i})\right)(\R^d)\\
    &= 1-\nnu\wedge\nnu^{-a_i}(\R^d).
\end{align*}
Since $S^j_{k}=S^{j\,\prime}_{k}$ for $2\le j\le n$, we get
\begin{equation}\label{proofs6}
  \|\delta_{e^{tA}(x-y)}*\mu_{t_1,\cdots,t_k}-\mu_{t_1,\cdots,t_k}\|_{\var}
    \le 2\,\Pp(T^S>k),
\end{equation}
where
$$
    T^S=\inf\{i\ge1\::\: S^1_{i}=S^{1\,\prime}_{i}+|e^{tA}(x-y)|\}.
$$

\medskip
\noindent \emph{Step 5}.
Since the real parts of all eigenvalues of $A$ are non-positive and since all purely imaginary eigenvalues are semisimple, we know from \cite[Proposition 11.7.2, Page 438]{Bern} that $C_A:=\sup_{t\ge0}\|e^{tA}\|<\infty$. In particular,
when $t\ge t_1+\cdots+t_i$,
$$
    \big|e^{(t-(t_1+\cdots+t_i))A}(x-y)\big|\le C_A|x-y|.
$$
From \eqref{th2233} we get that for all $i\ge1$ and $x, y\in\R^n$ with $|x-y|\le \delta(C_A\|\bar{B}\|)^{-1}$,
\begin{equation}\label{proofcon}\begin{aligned}
    \frac 12(1-p_i)
    &= \frac{1}{2}\big(\nnu\wedge\big(\delta_{-a_i}*\nnu)\big)(\R^d)\\
    &\ge \frac{1}{2}\inf_{|a|\le C_A\|\bar{B}\||x-y|}\nnu\wedge (\delta_a*\nnu)(\R^d)\\
    &\ge \frac{1}{2}\inf_{|a|\le \delta}\nnu\wedge (\delta_a*\nnu)(\R^d)\\
    &=:\frac{1}{2}\,\gamma(\delta)>0.
\end{aligned}\end{equation}

We will now estimate $\Pp(T^S>k)$. Let $V_i$, $i\geq 1$, be independent symmetric random variables on $\R$, whose distributions are given by
$$
    \Pp(V_i=z)
    = \begin{cases}
        \frac 12(1-p_i),     &\text{if\ \ } z=-|e^{tA}(x-y)|;\\
        \frac 12(1-p_i),     &\text{if\ \ } z=|e^{tA}(x-y)|;\\
        \qquad p_i,          &\text{if\ \ } z=0.
    \end{cases}
$$
Set $Z_k:=\sum_{i=1}^k V_i$. We have seen earlier that
$$
    T^S=\inf\{k\ge 1\::\: Z_k=|e^{tA}(x-y)|\}.
$$

For any $k\ge1$, let
$$
    \eta=\eta(k):=\#\big\{i\::\: i\le k\textrm{ and }V_i\neq 0\big\}
$$
and set $\tilde{Z}_k :=\sum_{i=1}^k\tilde{V}_i$, where $\tilde{V}_i$ denotes the $i$th $V_j$ such that $V_j\neq 0$. Then, $\tilde{Z}_k$ is a symmetric random walk with iid steps which are either $-|e^{tA}(x-y)|$ or $|e^{tA}(x-y)|$ with probability $1/2$. Define
$$
    T^{\tilde{Z}}:=\inf\{k\ge 1\::\: \tilde{Z}_k=|e^{tA}(x-y)|\}.
$$
By \eqref{proofcon},
\begin{equation}\label{lll1}\begin{aligned}
    \Pp(T^S>k)
    &=\Pp\left(T^S>k,\; \eta\ge \frac12\,\gamma(\delta)k\right)
        +\Pp\left(T^S>k,\, \eta\le \frac12\, \gamma(\delta)k\right)\\
    &\le\Pp\left(T^{\tilde{Z}}> \frac12\,\gamma(\delta)k\right)
        +\Pp\bigg(\eta\le \frac{1}{2}\sum_{i=1}^k(1-p_i)\bigg)\\
    &\le\Pp\left(T^{\tilde{Z}}>\frac12\,\gamma(\delta)k\right)
        +\Pp\bigg(\Big|\eta-\sum_{i=1}^k(1-p_i)\Big|\ge\frac{1}{2}\sum_{i=1}^k(1-p_i)\bigg).
\end{aligned}\end{equation}

Note that
$$
    \eta=\eta(k)=\sum_{i=1}^k\zeta_i,
$$
where $\zeta_i = \I_{\{V_i\neq 0\}}$, ${1\le i\le k}$, are independent random variables with
$\Pp(\zeta_i = 0) = p_i$ and $\Pp (\zeta_i=1)=1-p_i$. Chebyshev's inequality shows that
\begin{equation}\label{lll2}\begin{aligned}
    \Pp\bigg(\Big|\eta-\sum_{i=1}^k(1-p_i)\Big|\ge\frac{1}{2}\sum_{i=1}^k(1-p_i)\bigg)
    &\le \frac{4\var(\eta)}{\Big(\sum_{i=1}^k(1-p_i)\Big)^2}\\
    &=\frac{4\sum_{i=1}^kp_i(1-p_i)}{\Big(\sum_{i=1}^k(1-p_i)\Big)^2}\\
    &\le \frac{4(1-\gamma(\delta))\sum_{i=1}^k(1-p_i)}{\Big(\sum_{i=1}^k(1-p_i)\Big)^2}\\
    &\le\frac{4(1-\gamma(\delta))}{\gamma(\delta)k}.
\end{aligned}\end{equation}
For the second and the last inequality we have used \eqref{proofcon}.

On the other hand, by Lemma \ref{lemmaproofth13} below,
\begin{align*}
    \Pp\bigg(T^{\tilde{Z}}>\frac{\gamma(\delta)k}{2}\bigg)
    &=\Pp\bigg(\max_{i\le \big[\frac{\gamma(\delta)k}{2}\big]}\tilde{Z}_i<  |e^{tA}(x-y)|\bigg)\\
    &\leq 2\,\Pp\left(0\le \tilde{Z}_{\big[\frac{\gamma(\delta)k}{2}\big]} \leq |e^{tA}(x-y)|\right).
\end{align*}
From the construction above, we know that $(\tilde{Z}_k)_{k\ge1}$ is a symmetric random walk with iid steps with values $\pm |e^{tA}(x-y)|$. Using the central limit theorem we find for sufficiently large values of $k\geq k_0$ and some constant $C=C(k_0)$
\begin{equation}\label{lll3}\begin{aligned}
    \Pp\left(T^S>\frac12\,\gamma(\delta)k\right)
    &= 2\,\Pp\left(0\leq \frac{Z_k}{|e^{tA}(x-y)|\sqrt{\big[\frac{\gamma(\delta)k}{2}\big]}} \leq {\left[\frac{\gamma(\delta)k}{2}\right]}^{-1/2}\right)\\
    &\leq \frac{C}{\sqrt{2\pi}} \int_{0}^{  {\left[\frac{\gamma(\delta)k}{2}\right]}^{-1/2}} e^{-u^2/2}\,du\\
    &\leq \frac{C_{\gamma(\delta)}}{\sqrt k}.
\end{aligned}\end{equation}

Combining \eqref{lll1}, \eqref{lll2} and \eqref{lll3} gives for all $x,y\in\R^n$ with $|x-y| \le\delta(C_A\|\bar{B}\|)^{-1}$, $t\ge t_1+\cdots+t_k$ and $k\geq k_0$ that
$$
    \Pp\big(T^S>k\big)
    \le \frac{C_{\gamma(\delta)}}{\sqrt k}+\frac{4(1-\gamma(\delta))}{\gamma(\delta)k}.
$$
Finally, \eqref{proofs6} yields for all $x,y\in\R^n$ with $|x-y|\le \delta(C_A\|\bar{B}\|)^{-1}$, $t\ge t_1+\cdots+t_k$ and $k\ge1$, that
\begin{equation}\label{proofs7}
  \|\delta_{e^{tA}(x-y)}*\mu_{t_1,\cdots,t_k}-\mu_{t_1,\cdots,t_k}\|_{\var}
    \le \frac{C_{1,\delta,\nnu}}{\sqrt k}.
\end{equation}

\medskip
\noindent
\emph{Step 6}.
If we combine \eqref{proofs2}, \eqref{proofs3} and \eqref{proofs7}, we obtain that for all $x, y\in\R^n$ with $|x-y|\le \delta(C_A\|\bar{B}\|)^{-1}$,
 \begin{equation}\begin{aligned}\label{proofs8}
 \|&P_t(x,\cdot)-P_t(y,\cdot)\|_{\var}\\
 &\le 2e^{-C_\varepsilon t}
  +C_{1,\delta,\nnu}\sum_{k=1}^\infty\frac{1}{\sqrt{k}} \mathop{\int\cdots\int}\limits_{t_1+\cdots+t_k\le t<t_1+\cdots+t_{k+1}} \!\!\!\!\!\!\!\!C_\varepsilon^{k+1}e^{-C_\varepsilon(t_1+\cdots+t_{k+1})}\,dt_1\cdots dt_{k+1}\\
    &\le 2e^{-C_\varepsilon t}+C_{1,\delta,\nnu}e^{-C_\varepsilon t}\sum_{k=1}^\infty \frac{C_\varepsilon^{k+1}}{\sqrt{k}} \mathop{\int\cdots\int}\limits_{t_1+\cdots+t_k\le t}\!\!dt_1\cdots dt_{k}\\
    &\le 2e^{-C_\varepsilon t}+C_{1,\delta,\nnu}C_\varepsilon \sum_{k=1}^\infty\frac{C_\varepsilon^k t^k}{\sqrt{k}\,k!}e^{-C_\varepsilon t}\\
    &\le 2e^{-C_\varepsilon t}+\frac{\sqrt{2}C_{1,\delta,\nnu} C_\varepsilon (1-e^{-C_\varepsilon t})}{\sqrt{C_\varepsilon t}}\\
    &\le \frac{C_{2,\epsilon,\delta,\nnu}}{\sqrt{t}},
\end{aligned}\end{equation}
where the penultimate inequality follows as in \cite[Proposition 2.2]{SW}.

For any $x,$ $y\in\R^n$, set $k=\left[\frac{C_A\|\bar{B}\||x-y|}{\delta}\right]+1$. Pick $x_0, x_1, \ldots, x_k\in\R^n$ such that $x_0=x$, $x_k=y$ and $|x_i-x_{i-1}|\le \delta(C_A\|\bar{B}\|)^{-1}$ for $1\le i\le k$. By \eqref{proofs8},
\begin{align*}
    \|P_t(x,\cdot)-P_t(y,\cdot)\|_{\var}
    &\le \sum_{i=1}^k\|P_t(x_i,\cdot)-P_t(x_{i-1},\cdot)\|_{\var}\\
    &\le \frac{C_{\epsilon,\delta,\nnu,A,B}(1+|x-y|)}{\sqrt{t}},
\end{align*}
which finishes the proof of \eqref{th21}.
\end{proof}

\medskip

The following two lemmas have been used in the proof of Theorem \ref{th1} above. For the sake of completeness we include their proofs.

\begin{lemma}\label{lemmaproofth12}
    Let $B\in\R^{n\times d}$ and $(Z_t)_{t\ge0}$ be a $d$-dimensional L\'{e}vy process with characteristic exponent $\Phi$ as in \eqref{ou2}. Then, $(Z^B_t)_{t\ge0}:=(BZ_t)_{t\ge0}$ is a L\'{e}vy process on (a subspace of) $\R^n$, and the corresponding characteristic exponent is
    $$
        \R^n\ni\xi \mapsto \Phi_B(\xi):=\Phi(B^\top\xi).
    $$
    The L\'evy triplet $(Q_B,b_B,\nu_B)$ of $(Z^B_t)_{t\ge0}$ is given by $ Q_B=BQB^\top$, $\nu_B(C) = \nu\{y : By\in C\}$ and
    $$
        b_B=Bb+\int_{x\neq 0} Bx\,\big(\I_{\{z\in\R^d:|z|\le 1\}}(Bx) - \I_{\{z\in\R^d:|z|\le 1\}}(x)\big)\, \nu(dx).
    $$
\end{lemma}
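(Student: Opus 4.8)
The plan is to verify the claims in turn. First I would check that $(BZ_t)_{t\ge0}$ is again a L\'evy process: since $B$ is linear and continuous, $B(Z_t-Z_s)=BZ_t-BZ_s$ inherits the independence and stationarity of the increments of $Z$, $BZ_0=0$, and $t\mapsto BZ_t$ is stochastically continuous because $Z$ is; the process takes its values in the subspace $B\Rd\subseteq\R^n$. For the characteristic exponent one uses only the adjoint: for $\xi\in\R^n$,
\[
    \Ee\bigl(e^{i\scalp{\xi}{BZ_t}}\bigr)
    =\Ee\bigl(e^{i\scalp{B^\top\xi}{Z_t}}\bigr)
    =e^{-t\Phi(B^\top\xi)},
\]
so that $\Phi_B(\xi)=\Phi(B^\top\xi)$.

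The real work is to rewrite $\Phi(B^\top\xi)$ in the canonical L\'evy--Khintchine form on $\R^n$ and to match coefficients. Substituting $\eta=B^\top\xi$ in \eqref{ou2} and using $\scalp{QB^\top\xi}{B^\top\xi}=\scalp{BQB^\top\xi}{\xi}$ together with $\scalp{B^\top\xi}{z}=\scalp{\xi}{Bz}$, the Gaussian term becomes $\tfrac12\scalp{BQB^\top\xi}{\xi}$, which gives $Q_B=BQB^\top$ at once. In the jump integral the integrand $1-e^{i\scalp{\xi}{Bz}}+i\scalp{\xi}{Bz}\,\I_{\{|z|\le1\}}(z)$ depends on $z$ only through $Bz$, apart from the cutoff, so the natural step is a change of variables under the push-forward $\nu_B:=\nu\circ B^{-1}$ on $\R^n\setminus\{0\}$; the $z$ with $Bz=0$ contribute nothing and are discarded. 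Before doing so I would record that $\nu_B$ is a genuine L\'evy measure, since $\int(1\wedge|y|^2)\,\nu_B(dy)=\int(1\wedge|Bz|^2)\,\nu(dz)\le(\|B\|^2\vee1)\int(1\wedge|z|^2)\,\nu(dz)<\infty$.

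The one genuinely delicate point---and the main obstacle---is the compensator. The source cutoff $\I_{\{|z|\le1\}}$ lives in $\R^d$, whereas the canonical representation in $\R^n$ requires the cutoff $\I_{\{|Bz|\le1\}}$. I would add and subtract $i\scalp{\xi}{Bz}\,\I_{\{|Bz|\le1\}}$, splitting the integrand into the $\R^n$-truncated L\'evy--Khintchine integrand, which integrates cleanly against $\nu_B$, plus the linear term $i\scalp{\xi}{Bz}\bigl(\I_{\{|z|\le1\}}(z)-\I_{\{|Bz|\le1\}}(Bz)\bigr)$. Integrating the latter yields a finite vector which, added to $Bb$, is precisely the drift $b_B$; convergence holds because the two indicators differ only on $\{1/\|B\|<|z|\le1\}\cup\{|z|>1,\,|Bz|\le1\}$, where $\nu$ is finite or $|Bz|$ is bounded. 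What demands care here is the bookkeeping of this exchange of cutoffs, in particular the orientation---and hence the sign---of the drift correction; the remaining identifications are routine linear algebra together with Fubini's theorem.
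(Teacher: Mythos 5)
Your proposal follows the same route as the paper: the paper's entire proof consists of the characteristic--function identity $\Ee(e^{i\scalp{\xi}{BZ_t}})=\Ee(e^{i\scalp{B^\top\xi}{Z_t}})=e^{-t\Phi(B^\top\xi)}$ followed by the remark that the assertion follows from \eqref{ou2} and ``some straightforward calculations''; your steps (L\'evy property of $BZ_t$, $Q_B=BQB^\top$, the push-forward $\nu_B=\nu\circ B^{-1}$ with the integrability check, and the exchange of cutoffs with the finiteness argument for the correction integral) are exactly those omitted calculations, and they are carried out correctly.

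There is, however, one point you must not gloss over, and it is precisely the sign you yourself flagged as delicate. Your decomposition produces the drift
\begin{equation*}
    b_B \;=\; Bb+\int_{z\neq 0} Bz\,\big(\I_{\{|z|\le 1\}}(z)-\I_{\{|Bz|\le 1\}}(Bz)\big)\,\nu(dz),
\end{equation*}
whereas the lemma as printed has the correction $\I_{\{|Bx|\le1\}}(Bx)-\I_{\{|x|\le1\}}(x)$ --- the opposite sign. So your closing claim that your vector ``is precisely the drift $b_B$'' does not literally match the statement. The resolution is that \emph{your} sign is the correct one for the paper's convention: in \eqref{ou2} the drift enters as $+i\scalp{b}{\xi}$ inside $\Phi$, where $\Ee(e^{i\scalp{\xi}{Z_t}})=e^{-t\Phi(\xi)}$, so this $b$ is the \emph{negative} of the drift in the usual ($\log$-characteristic-function) convention; the printed formula is the standard transformation rule (cf.\ \cite[Proposition 11.10]{SA}) transplanted without performing this sign flip. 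A concrete check: take $d=n=1$, $B=2$ and $Z$ a driftless compound Poisson process with $\nu=\delta_{3/4}$, so $\Phi(\xi)=1-e^{3i\xi/4}$ and hence $b=-3/4$ in the convention of \eqref{ou2}; then $2Z_t$ has $\Phi_B(\xi)=1-e^{3i\xi/2}$, whose canonical form has $b_B=0$ because the jump size $3/2$ needs no compensation. Your formula gives $2\cdot(-\tfrac34)+\tfrac32\cdot(1-0)=0$, the printed one gives $-3$. So your computation is sound and in fact exposes a sign typo in the statement; you should state this discrepancy explicitly rather than assert agreement. (The paper only ever uses $\Phi_B=\Phi\circ B^\top$ and $\nu_B$, never the explicit $b_B$, so nothing downstream is affected.)
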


\begin{proof}
    For all $\xi\in\R^n$ and $t\ge0$, we have
    $$
        \Ee(e^{i\scalp{\xi}{Z^B_t}})
        =\Ee(e^{i\scalp{\xi}{BZ_t}})
        =\Ee(e^{i\scalp{B^\top\xi}{Z_t}})
        =e^{-t\Phi(B^\top\xi)}.
    $$
    The assertion follows from \eqref{ou2} and some straightforward calculations.
\end{proof}

\begin{lemma}\label{lemmaproofth11}
    Let $A\in\R^{n\times n}$, $B\in\R^{n\times d}$ and $(Z_t)_{t\ge0}$ be a $d$-dimensional L\'{e}vy process with the characteristic exponent $\Phi$ as in \eqref{ou2}. For all $t>0$ the random variables $\int_0^t e^{(t-s)A}B\,dZ_s$ and $\int_0^t e^{sA}B\,dZ_s$ have the same probability distribution. Furthermore, both random variables are infinitely divisible, and the characteristic exponent (log-characteristic function) is given by
    $$
        \R^n\ni\xi \mapsto \Phi_t(\xi):=\int_0^t\Phi\big(B^\top e^{sA^\top}\xi\big)\,ds.
    $$
\end{lemma}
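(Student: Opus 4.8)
The plan is to compute the characteristic functions of both random variables directly and to show that they coincide; the common expression will simultaneously yield the equality in distribution and the formula for $\Phi_t$. The central tool is the following elementary identity: for a deterministic, bounded, measurable matrix-valued function $f\colon[0,t]\to\R^{n\times d}$, the stochastic integral $\int_0^t f(s)\,dZ_s$ is well defined and
$$
    \Ee\exp\Bigl(i\scalp{\xi}{\int_0^t f(s)\,dZ_s}\Bigr)
    =\exp\Bigl(-\int_0^t\Phi\bigl(f(s)^\top\xi\bigr)\,ds\Bigr),\qquad\xi\in\R^n.
$$

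First I would establish this identity by approximation. For a step function $f=\sum_{j=1}^m c_j\I_{(s_{j-1},s_j]}$ with $0=s_0<\cdots<s_m=t$ the integral is the finite sum $\sum_j c_j(Z_{s_j}-Z_{s_{j-1}})$, and the independence and stationarity of the increments of $Z$ give $\Ee\exp(i\scalp{\xi}{\sum_j c_j(Z_{s_j}-Z_{s_{j-1}})})=\prod_j\exp(-(s_j-s_{j-1})\Phi(c_j^\top\xi))$, which is exactly the right-hand side for step functions. Since $s\mapsto e^{(t-s)A}B$ and $s\mapsto e^{sA}B$ are continuous, hence bounded on $[0,t]$, and since $\Phi$ is continuous with at most quadratic growth, one approximates these integrands uniformly by step functions, passes to the limit in the stochastic integral (the approximating Riemann sums converge in probability) and in the exponent (by dominated convergence), and thereby obtains the displayed identity for $f(s)=e^{(t-s)A}B$ and for $f(s)=e^{sA}B$.

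Applying the identity to $f(s)=e^{(t-s)A}B$, so that $f(s)^\top=B^\top e^{(t-s)A^\top}$, yields the exponent $\int_0^t\Phi(B^\top e^{(t-s)A^\top}\xi)\,ds$; the substitution $u=t-s$ turns this into $\int_0^t\Phi(B^\top e^{uA^\top}\xi)\,du=\Phi_t(\xi)$. Applying it to $f(s)=e^{sA}B$ gives the exponent $\Phi_t(\xi)$ directly. Hence both random variables have characteristic function $e^{-\Phi_t(\xi)}$, and by the uniqueness theorem for characteristic functions they share the same law, with characteristic exponent $\Phi_t$ as claimed.

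For infinite divisibility I would argue that each approximating sum $\sum_j e^{s_j A}B(Z_{s_j}-Z_{s_{j-1}})$ is a sum of independent infinitely divisible random variables, since the increments of $Z$ are infinitely divisible and linear images of infinitely divisible laws are infinitely divisible (cf.\ Lemma \ref{lemmaproofth12}); such a sum is therefore itself infinitely divisible. As the class of infinitely divisible distributions on $\R^n$ is closed under weak convergence, the limit $\int_0^t e^{sA}B\,dZ_s$ is infinitely divisible as well. The main technical point is the justification of the limit passage, namely that the deterministic-integrand stochastic integral is well defined and that the approximating Riemann sums converge, both in probability and at the level of characteristic functions; once this is secured, everything else reduces to a change of variables and to the standard closure properties just mentioned.
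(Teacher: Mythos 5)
Your proposal is correct, and its computational core --- both random variables have characteristic function $\exp(-\Phi_t(\xi))$, obtained from the identity $\Ee\exp\bigl(i\scalp{\xi}{\int_0^t f(s)\,dZ_s}\bigr)=\exp\bigl(-\int_0^t\Phi(f(s)^\top\xi)\,ds\bigr)$ together with the substitution $u=t-s$ --- is exactly the paper's. You differ in the supporting details, in ways worth recording. First, the paper does not prove that identity for matrix-valued integrands: it reduces to the case $n=d$, $B=\id$ by passing to the L\'evy process $Z^B_t=BZ_t$ (Lemma \ref{lemmaproofth12}) and then cites the proof of \cite[(17.3)]{SA}, whereas you prove the identity directly by step-function approximation for a general bounded measurable matrix integrand; this is essentially inlining the cited argument of \cite{SA}, and it makes the reduction step unnecessary. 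Second, and more substantively, your proof of infinite divisibility (the approximating Riemann sums are finite sums of independent linear images of increments of $Z$, hence infinitely divisible, and the class of infinitely divisible laws is closed under convolution and weak limits) is genuinely different from the paper's, and it is in fact the safer route. The paper replaces $A$ by $\frac{1}{k}A$, sets $Y_k:=\int_0^te^{sA/k}\,dZ_s$, and claims that the sum of $k$ independent copies of $Y_k$ has the same law as $\int_0^te^{sA}\,dZ_s$. Taken literally this fails: the sum of $k$ independent copies of $Y_k$ has characteristic exponent
$$
k\int_0^t\Phi\bigl(e^{sA^\top/k}\xi\bigr)\,ds
=k^2\int_0^{t/k}\Phi\bigl(e^{uA^\top}\xi\bigr)\,du,
$$
which for $A=0$ equals $kt\,\Phi(\xi)$ rather than $t\,\Phi(\xi)$; the claim would assert that $Z_{kt}$ and $Z_t$ have the same law. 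The scaling must be applied to the noise, not to the drift matrix: if $Z^{(k)}$ denotes the L\'evy process with exponent $\Phi/k$, then the sum of $k$ independent copies of $\int_0^te^{sA}\,dZ^{(k)}_s$ does have exponent $\Phi_t$. Your weak-limit argument sidesteps this issue entirely and requires nothing beyond the approximation you already set up for the characteristic function identity.
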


\begin{proof}
    We first assume that $n=d$ and $B=\id_{\R^d}$. For any $t>0$, we can use Lemma \ref{lemmaproofth12} and follow the proof of \cite[(17.3)]{SA} to deduce
    $$
        \Ee\left[\exp\left(i\lrscalp{\xi}{\int_0^t e^{(t-s)A}\,dZ_s}\right)\right]
        =\exp\left[-\int_0^t\Phi(e^{(t-s)A^\top}\xi)\,ds\right]
    $$
    for all $\xi\in\R^d$. Similarly, for every $\xi\in\R^d$,
    $$
        \Ee\left[\exp\left(i\lrscalp{\xi}{\int_0^t e^{sA}\,dZ_s}\right)\right]
        =\exp\left[-\int_0^t\Phi(e^{sA^\top}\xi)\,ds\right].
    $$
    Since
    $$
        \exp\left[-\int_0^t\Phi(e^{(t-s)A^\top}\xi)\,ds\right]
        =\exp\left[-\int_0^t\Phi(e^{sA^\top}\xi)\,ds\right],
    $$
    it follows that $\int_0^t e^{(t-s)A}B\,dZ_s$ and $\int_0^t e^{sA}B\,dZ_s$ have the same law.

    Now replace in the preceding calculations $A$ with $\frac 1k\,A$, $k\geq 1$, and set $Y_k := \int_0^t e^{s\,\frac 1k\, A}\,dZ_s$. Denote by $Y_k^{(j)}$, $1\leq j\leq k$, independent copies of $Y_k$. It is straightforward to see that $\sum_{j=1}^k Y_k^{(j)}$ and $\int_0^t e^{sA}\,dZ_s$ have the same law. This proves the infinite divisibility.

    If $n\neq d$, we consider, as in Lemma \ref{lemmaproofth12}, the L\'{e}vy process $(Z^B_t)_{t\ge0}:=(BZ_t)_{t\ge0}$ on (a subspace of) $\R^n$. Then, for any $\xi\in\R^n$,
    $$
        \Ee\left[\exp\left(i\lrscalp{\xi}{\int_0^t e^{(t-s)A}B\,dZ_s}\right)\right]
        =\Ee\left[\exp\left(i\lrscalp{\xi}{\int_0^t
        e^{(t-s)A}\,dZ^B_s}\right)\right],
    $$
    and the claim follows from the first part of our proof.
\end{proof}

The following result presents the upper estimate for the distribution of the maximum of a symmetric random walk, by using the reflection principle. Since we could not find a precise reference in the literature, we include the complete proof for the readers' convenience.

\begin{lemma}\label{lemmaproofth13}
    Consider a random walk $(S_i)_{i\ge1}$ on $\Z$ with iid steps, which attain the values $-1$, $1$ and $0$ with probabilities $(1-r)/2$, $(1-r)/2$ and $r$ $(0\le r<1)$, respectively. Then for any positive integers $a$ and $k$, we have
\begin{equation}\label{lemmaproofth1311}
    2\Pp(S_k> a)
    \le \Pp\Big(\max_{i\le k}S_i\ge a\Big)
    \le 2\Pp(S_k\ge a)
\end{equation}
    and
$$
    2\Pp\big(0<S_k<a\big)
    \le \Pp\Big(\max_{1\le i\le k}S_i < a\Big)\le 2\Pp\big(0\le S_k\le a\big).
$$
\end{lemma}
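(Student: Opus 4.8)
The plan is to reduce both double inequalities to two \emph{exact} identities coming from the reflection principle, and then to read off the stated bounds simply by discarding or doubling nonnegative terms. The structural feature I would exploit throughout is that the steps lie in $\{-1,0,1\}$, so the walk cannot overshoot a positive level: setting $\tau:=\inf\{i\ge1:S_i=a\}$ for the first passage time to $a$, the unit-step property gives $\{\max_{i\le k}S_i\ge a\}=\{\tau\le k\}$. The lazy step (value $0$ with probability $r$) causes no difficulty, since swapping $+1\leftrightarrow-1$ and fixing $0$ leaves the common step law invariant, so all increments are symmetric about $0$.

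First I would prove the reflection identity. Since $\tau$ is a stopping time and $(S_i)_{i\ge1}$ is a random walk with iid symmetric steps, on $\{\tau=j\}$ the increment $S_k-S_j=S_k-a$ is a sum of $k-j$ iid symmetric steps that is independent of $\mathscr{F}_j$, hence symmetric; therefore $\Pp(S_k>a\mid\tau=j)=\Pp(S_k<a\mid\tau=j)$ for each $j\le k$ with $\Pp(\tau=j)>0$. Summing over $j$ and using $\{S_k>a\}\subseteq\{\tau\le k\}$ together with $\{S_k=a\}\subseteq\{\tau\le k\}$, I obtain
\[
    \Pp\Big(\max_{i\le k}S_i\ge a\Big)=\Pp(\tau\le k)=2\,\Pp(S_k>a)+\Pp(S_k=a).
\]
The first chain of inequalities in \eqref{lemmaproofth1311} is then immediate: dropping $\Pp(S_k=a)$ yields the lower bound $2\Pp(S_k>a)$, while replacing $\Pp(S_k=a)$ by $2\Pp(S_k=a)$ yields the upper bound $2\Pp(S_k\ge a)$.

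For the second double inequality I would pass to the complementary event and invoke the spatial symmetry $\Pp(S_k=m)=\Pp(S_k=-m)$. Writing $1=\Pp(S_k=0)+2\Pp(S_k>0)$ and splitting $\Pp(S_k>0)=\Pp(0<S_k<a)+\Pp(S_k\ge a)$, I substitute into $\Pp(\max_{i\le k}S_i<a)=1-2\Pp(S_k>a)-\Pp(S_k=a)$ to reach the second identity
\[
    \Pp\Big(\max_{1\le i\le k}S_i<a\Big)=\Pp(S_k=0)+2\,\Pp(0<S_k<a)+\Pp(S_k=a).
\]
Discarding the two boundary terms gives the lower bound $2\Pp(0<S_k<a)$, and bounding $\Pp(S_k=0)+\Pp(S_k=a)$ by $2\big(\Pp(S_k=0)+\Pp(S_k=a)\big)$ gives the upper bound $2\Pp(0\le S_k\le a)$.

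The only genuinely delicate point, which I would write out with care, is the conditional symmetry of $S_k-S_j$ on $\{\tau=j\}$: this is where the strong Markov property (equivalently, the independence of the increments past the stopping time $\tau$) enters, and where the ``no overshoot'' identity $\{\max_{i\le k}S_i\ge a\}=\{\tau\le k\}$ must be justified from the support of the steps. Once these two facts are secured, everything else is bookkeeping with the symmetry $\Pp(S_k=m)=\Pp(S_k=-m)$, and both stated inequalities follow from the two exact identities without any further estimation.
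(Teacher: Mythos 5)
Your proof is correct and follows essentially the same route as the paper: both hinge on the reflection principle at the first passage time $\tau=\inf\{i\ge1:S_i=a\}$, the no-overshoot identity $\{\max_{i\le k}S_i\ge a\}=\{\tau\le k\}$ (valid because the steps lie in $\{-1,0,1\}$), and the symmetry of the post-$\tau$ increments, yielding the same key identity $\Pp(\max_{i\le k}S_i\ge a)=\Pp(S_k\ge a)+\Pp(S_k>a)=2\Pp(S_k>a)+\Pp(S_k=a)$. The only cosmetic difference is in the second double inequality, where you derive the exact identity $\Pp(\max_{i\le k}S_i<a)=\Pp(S_k=0)+2\Pp(0<S_k<a)+\Pp(S_k=a)$ and discard or double boundary terms, while the paper complements and uses $\Pp(S_k\ge0)\ge\tfrac12$ and $\Pp(S_k>0)\le\tfrac12$; both are immediate consequences of the same symmetry.
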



\begin{proof}[Proof]
    Fix any positive integer $a$ and define $\tau :=\tau_a := \inf\{i\ge 1\::\: S_i=a\}$. Since the random walk has iid steps, it is obvious that $(S_{i+\tau}-S_\tau)_{i\geq 0}$ and $(S_i)_{i\geq 0}$ are independent random walks having the same law. Observing that $S_\tau = a$ and $\left\{\max_{i\le k}S_i\ge a\right\} = \{\tau \leq k\}$ we find, therefore,
    \begin{align*}
    \Pp\Bigl(\max_{i\le k}S_i\ge a\Bigr)
    &=\Pp\Bigl(\max_{i\le k}S_i\ge a,\; S_k\ge a\Bigr) + \Pp\Bigl(\max_{i\le k}S_i\ge a,\; S_k < a\Bigr)\\
    &=\Pp(S_k\ge a)+\Pp(\tau\leq k,S_k<S_\tau)\\
    &=\Pp(S_k\ge a)+\Pp(\tau\leq k,S_k>S_\tau)\\
    &=\Pp(S_k\ge a)+\Pp(S_k>a).
    \end{align*}
    From this we conclude that
    $$
        2\Pp(S_k\ge a)
        \ge \Pp\Bigl(\max_{i\le k}S_i\ge a\Bigr)
        \ge 2\Pp(S_k>a).
    $$
    Since $\Pp(S_k\ge0)=\Pp(S_k\le0)\ge1/2$, we see
    \begin{align*}
        \Pp\Bigl(\max_{i\le k}S_i< a\Bigr)
        &=1-\Pp\Bigl(\max_{i\le k}S_i\ge a\Bigr)\\
        &\le 1-2\Pp(S_k>a)\\
        &\le 2\big(\Pp(S_k\ge0)-\Pp(S_k>a)\big)\\
        &=2\Pp(0\le S_k\le a);
    \end{align*}
    the other inequality follows similarly if we use $\Pp(S_k>0)=\Pp(S_k<0)\le 1/2$.
\end{proof}

\bigskip

Next, we turn to the proof of Theorem \ref{coup}.

\begin{proof}[Proof of Theorem \ref{coup}]
    \emph{Step 1}. As in the proof of 
     Lemma \ref{lemmaproofth11} we may, without loss of generality, assume that $n=d$ and $B=\id_{\R^d}$. For $t>0$, denote by $\mu_t$ the law of $X_t^0:=\int_0^t e^{(t-s)A}\,dZ_s$. According to Lemma \ref{lemmaproofth11}, the law $\mu_t$ is an infinitely divisible probability distribution, and the characteristic exponent of $\mu_t$ is given by
    $$
        \Phi_t(\xi):=\int_0^t\Phi\big(e^{sA^\top}\xi\big)\,ds.
    $$
    Since the driving L\'evy process $(Z_t)_{t\ge0}$ has no Gaussian part, the L\'evy {triplet} $(0,b_t,\nu_t)$ of $\Phi_t$ is given by, cf.\ \cite[Theorem 3.1]{SAT},
    \begin{gather*}
        \nu_t(C)= \int_0^t\nu(e^{-sA}C)\,ds,\qquad C\in\mathscr{B}(\R^d\setminus\{0\}),\\
        b_t= \int_0^t e^{sA}b\,ds + \int_{z\neq 0} \int_0^t e^{sA}z\Big(\I_{\{|z|\le 1\}} \big(e^{sA}z\big) - \I_{\{|z|\le1\}}(z) \Big)\,ds\,\nu(dz).
    \end{gather*}
    For every $r>0$, let $\{\mu_t^r, t\ge0\}$ be the family of infinitely divisible probability measures on $\R^d$ whose Fourier transform is of the form $\widehat{\mu}^r_t(\xi)=\exp(-\Phi_{t,r}(\xi))$, where
    $$
        \Phi_{t,r}(\xi) = \int_{|z|\le r} \left(1-e^{i\scalp{\xi}{z}}+i\scalp{\xi}{z}\right)\,\nu_t(dz)
    $$
    with $\nu_t$ as above.

    Set $h(t):=1\big/\varphi_t^{-1}(1)$. Following the proof of \cite[Propostion 2.2]{SSW}, the conditions \eqref{coup1} and \eqref{coup2} ensure that there exists $t_1>0$ such that for all $t\ge t_1$, the measure $\mu^{h(t)}_t$ has a density $p^{h(t)}_t\in C^{n+2}_b(\R^d)$; moreover,
    \begin{equation}\label{proofcoup1}
       \big|\nabla p^{h(t)}_t(y)\big|
       \leq c(n,\Phi)\,h(t)^{-(n+1)}\big(1+h(t)^{-1}|y|\big)^{-(n+1)}
    \end{equation}
    holds for all $y\in\R^d$.

\medskip
\noindent
\emph{Step 2}.
    For $r>0$ and $\xi\in\R^d$, define
    $$
    \Psi_{t,r}(\xi)
    := \Phi_t(\xi)-\Phi_{t,r}(\xi)
    = \int_{|z|>r}\left(1-e^{i\scalp{\xi}{z}}\right)\,\nu_t(dz) - i\lrscalp{\xi}{\int_{1<|z|\le r}z\,\nu_t(dz)-b_t}.
    $$
    Since $\Psi_{t,r}$ is given by a L\'evy-Khintchine formula, it is the characteristic exponent of some $d$-dimensional infinitely divisible random variable. Let $\{\mmu_t^{r}, t\ge 0\}$ be the family of infinitely divisible measures whose Fourier transforms are of the form
    $\widehat{\mmu}^{r}_t(\xi)=\exp(-\Psi_{t,r}(\xi))$.
    Clearly, $\mu_t=\mu_t^r*\mmu_t^{r}$ for all $t,r>0$.

    Let $P_t(x,\cdot)$ and $P_t$ be the transition function and the transition semigroup of the Ornstein-Uhlenbeck process $\{X_t^x\}_{t\ge0}$ given by \eqref{ou1}. For all $f\in B_b(\R^d)$ we have
    \begin{align*}
        P_tf(x)
        &=\int f\bigl(e^{tA}x+z\bigr)\,\mu_t(dz)\\
        &=\int f\bigl(e^{tA}x+z\bigr)\,\mu^r_t*\mmu_t^{r}(dz)\\
        &=\iint f\bigl(e^{tA}x+z_1+z_2\bigr)\,\mmu_t^{r}(dz_1)\,\mu^r_t(dz_2).
    \end{align*}

    Taking $r=h(t)$ we get, using the conclusions of step 1, that for all $t\ge t_1$ and $x\in\R^d$,
    \begin{align*}
        P_tf(x)
        &= \int p^{h(t)}_t(z_2)\,dz_2 \int f\bigl(e^{tA}x+z_1+z_2\bigr)\,\mmu_t^{h(t)}(dz_1)\\
        &= \int p^{h(t)}_t\bigl(z_2-e^{tA}x\bigr)\,dz_2 \int f(z_1+z_2)\,\mmu_t^{h(t)}(dz_1).
    \end{align*}
    If $\|f\|_\infty\le 1$, then
    $$
        \bigg\|\int f(z_1+\cdot)\,\mmu_t^{h(t)}(dz_1)\bigg\|_\infty
        \le \|f\|_\infty\, \mmu_t^{h(t)}(\R^d) \le 1.
    $$

\medskip
\noindent
\emph{Step 3}.
    For all $x,y\in\R^d$,
    \begin{equation}\label{proofcoup2}\begin{aligned}
    \|P_t(x,&\cdot)-P_t(y,\cdot)\|_{\var}\\
    &= \sup_{\|f\|_\infty\le 1}\big|P_tf(x)-P_tf(y)\big|\\
    &= \sup_{\|f\|_\infty\le 1} \bigg|\int p^{h(t)}_t\bigl(z_2-e^{tA}x\bigr)\,dz_2 \int f(z_1+z_2)\,\mmu_t^{h(t)}(dz_1)\\
    &\qquad\qquad\quad\mbox{}-\int p^{h(t)}_t\bigl(z_2-e^{tA}y\bigr)\,dz_2 \int f(z_1+z_2)\,\mmu_t^{h(t)}(dz_1)\bigg|\\
    &\le \sup_{\|g\|_\infty\le 1} \bigg|\int g(z)p^{h(t)}_t\bigl(z-e^{tA}x\bigr)\,dz
    - \int g(z)p^{h(t)}_t\bigl(z-e^{tA}y\bigr)\,dz\bigg|\\
    &= \sup_{\|g\|_\infty\le 1} \bigg|\int g(z)\Big(p^{h(t)}_t\bigl(z-e^{tA}x\bigr)-p^{h(t)}_t\bigl(z-e^{tA}y\bigr)\Big)\, dz\bigg|\\
    &= \int \Big|p^{h(t)}_t\bigl(z-e^{tA}x\bigr)-p^{h(t)}_t\bigl(z-e^{tA}y\bigr)\Big|\,dz.
    \end{aligned}\end{equation}

    With the argument used in the proof of \cite[Theorem 3.1]{SSW}, \eqref{coup3} follows from \eqref{proofcoup1} and \eqref{proofcoup2}.

  \medskip
\noindent
\emph{Step 4}.
    By assumption \eqref{coup4}, $$\varphi_\infty(\rho):=\sup_{|\xi|\le \rho}\int_0^\infty\Re\Phi\big(B^\top e^{sA^\top}\xi\big)\,ds$$ is finite on $(0,\infty)$; in particular,
    $\varphi_\infty^{-1}(1)\in(0,\infty]$.
    On the other hand, for any $t\ge t_0$, according to \eqref{coup1},
       \begin{align*}\int \exp\left(-\int_0^t\Re\Phi\big(B^\top e^{sA^\top}\xi\big)\,ds\right)& |\xi|^{n+2} \,d\xi\\
       &\le\int \exp\left(-\int_0^{t_0}\Re\Phi\big(B^\top e^{sA^\top}\xi\big)\,ds\right) |\xi|^{n+2} \,d\xi\\
       &=:C(t_0)<\infty. \end{align*} Since the function $t\mapsto \varphi^{-1}_t(1)$ is decreasing on $(0,\infty]$, \eqref{coup2} holds.
    This finishes the proof.
    \end{proof}

\section{Appendix}\label{sec-appendix}
\subsection{Gradient Estimates for Ornstein-Uhlenbeck Processes}\label{subsec-appendix1}

Motivated by \cite[Theorem 1.3]{SSW}, we have the following results
for gradient estimates of an Ornstein-Uhlenbeck process. This is the
counterpart of Theorem \ref{coup}. For $t, \rho>0$, define
$$
    \varphi(\rho)
    :=\sup_{|\xi|\le \rho}\Re\Phi\bigl(B^\top \xi\bigr)\quad\textrm{ and }\quad \varphi_t(\rho)
    :=\sup_{|\xi|\le \rho}\int_0^t\Re\Phi\big(B^\top e^{sA^\top}\xi\big)\,ds,
$$
where $\Phi$ is the characteristic exponent of the driving L\'evy
process $(Z_t)_{t\geq 0}$ from \eqref{ou1}.

\begin{theorem}\label{strong}
    Let $P_t(x,\cdot)$ be the transition function of the $n$-dimensional Ornstein-Uhlenbeck process $\{X_t^x\}_{t\ge0}$ given by \eqref{ou1}. Assume that
    \begin{equation}\label{strong1}
        \liminf_{|\xi|\rightarrow\infty} \frac{\Re\Phi\bigl(B^\top \xi\bigr)}{\log (1+|\xi|)} =\infty.
    \end{equation}
 If for any $C>0$,
    \begin{equation}\label{strong2}
        \int \exp\left[-C t\Re\Phi\bigl(B^\top\xi\bigr)\right] |\xi|^{n+2} \,d\xi
        = \mathsf{O}\left(\varphi^{-1}\Big(\frac{1}{t}\Big)^{2n+2}\right) \qquad\text{as\ \ } t\to 0,
    \end{equation}
    then there exists $c>0$ such that for all $t>0$ and $f\in{B}_b(\R^n)$,
    \begin{equation}\label{strong3}
        \|\nabla P_t f\|_\infty\le c\|f\|_\infty \,\varphi^{-1}\Big(\frac{1}{t\wedge1}\Big).
    \end{equation}
    If, in addition, $$\xi\mapsto\int_0^\infty\Re\Phi\big(B^\top e^{sA^\top}\xi\big)\,ds\quad \textrm{ is locally bounded},$$
       then there exist $t_1,c>0$ such that for $t\ge t_1$ and
       $f\in{B}_b(\R^n)$,
    \begin{equation}\label{strong4}
        \|\nabla P_t f\|_\infty
        \leq c\,\|f\|_\infty \bigg[\|e^{tA}\|\,\varphi_t^{-1}(1)\bigg],
    \end{equation}
    where $\|M\|=\sup_{|x|\le1}|Mx|$ denotes the norm of the matrix of $M$.
\end{theorem}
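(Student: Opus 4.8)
The plan is to adapt the strategy of Theorem~\ref{coup}, tracking the gradient of the smooth part of the transition density instead of its total-variation oscillation. Following Step~1 of the proof of Theorem~\ref{coup} and Lemma~\ref{lemmaproofth11}, I would first reduce to $n=d$ and $B=\id_{\R^d}$, so that the law $\mu_t$ of $X_t^0=\int_0^t e^{(t-s)A}\,dZ_s$ is infinitely divisible with characteristic exponent $\Phi_t(\xi)=\int_0^t\Phi(e^{sA^\top}\xi)\,ds$. For a truncation level $r>0$ I would then split $\mu_t=\mu_t^r*\mmu_t^r$ exactly as in Step~2 of the proof of Theorem~\ref{coup}, where $\mu_t^r$ carries the small jumps truncated at $|z|\le r$ and $\mmu_t^r$ is a probability measure collecting the remainder.

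The heart of the matter --- and the step I expect to be the main obstacle --- is the density estimate for the small-jump part in the small-time regime. Following \cite[Theorem 1.3]{SSW}, I would use Fourier inversion together with a comparison of $\Re\Phi_{t,r}(\xi)$ against the \emph{frozen} exponent $t\,\Re\Phi(B^\top\xi)$: for small $t$ one has $e^{sA^\top}\approx\id$ on $[0,t]$, so that $\nu_t$ is comparable with $t\,\nu$, and hence $\Re\Phi_{t,r}$ with $t\,\Re\Phi$. Conditions \eqref{strong1} and \eqref{strong2} are precisely what is needed to control the integrals $\int|\xi|^{j}e^{-\Re\Phi_{t,r}(\xi)}\,d\xi$ for $0\le j\le n+2$, and, with the choice $r=r(t):=1/\varphi^{-1}(1/t)$, to guarantee that $\mu_t^{r(t)}$ has a density $p_t^{r(t)}\in C_b^{n+2}(\R^d)$ obeying a gradient bound of the form
\begin{equation*}
    \big|\nabla p_t^{r(t)}(y)\big|
    \le c(n,\Phi)\,\varphi^{-1}(1/t)^{n+1}\bigl(1+\varphi^{-1}(1/t)\,|y|\bigr)^{-(n+1)},
\end{equation*}
in analogy with \eqref{proofcoup1}. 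Integrating over $y\in\R^d$ then yields the $L^1$-estimate $\|\nabla p_t^{r(t)}\|_{L^1}\le c\,\varphi^{-1}(1/t)$.

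To pass from the density to the gradient of the semigroup, I would write, for $\|f\|_\infty\le1$,
\begin{equation*}
    P_tf(x)
    =\iint f(w)\,p_t^{r(t)}\bigl(w-e^{tA}x-z_1\bigr)\,dw\,\mmu_t^{r(t)}(dz_1),
\end{equation*}
differentiate in $x$, and move the gradient onto $p_t^{r(t)}$; the chain rule produces the factor $(e^{tA})^\top$, while $\mmu_t^{r(t)}$ has total mass $1$, whence
\begin{equation*}
    \|\nabla P_tf\|_\infty
    \le \|e^{tA}\|\,\|f\|_\infty\,\|\nabla p_t^{r(t)}\|_{L^1}
    \le c\,\|e^{tA}\|\,\|f\|_\infty\,\varphi^{-1}(1/t).
\end{equation*}
For $t\le1$ the factor $\|e^{tA}\|\le e^{\|A\|}$ is bounded and $t\wedge1=t$, which already gives \eqref{strong3} on $(0,1]$. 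For $t\ge1$ I would use the semigroup property and the contractivity of $P_{t-1}$ on $B_b(\R^n)$: since $P_tf=P_1(P_{t-1}f)$ with $\|P_{t-1}f\|_\infty\le\|f\|_\infty$, the case $t=1$ of the preceding bound yields $\|\nabla P_tf\|_\infty\le c\,\|f\|_\infty\,\varphi^{-1}(1)=c\,\|f\|_\infty\,\varphi^{-1}(1/(t\wedge1))$, completing the proof of \eqref{strong3}.

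For the refined large-time estimate \eqref{strong4}, the additional local boundedness of $\xi\mapsto\int_0^\infty\Re\Phi(B^\top e^{sA^\top}\xi)\,ds$ is exactly condition \eqref{coup4}, which (as in Step~4 of the proof of Theorem~\ref{coup}) supplies the gradient bound \eqref{proofcoup1} for large $t$ with the \emph{full} integrated exponent $\Phi_t$ in place of its frozen approximation. Taking $r=h(t):=1/\varphi_t^{-1}(1)$ and integrating \eqref{proofcoup1} over $y$ gives $\|\nabla p_t^{h(t)}\|_{L^1}\le c\,\varphi_t^{-1}(1)$; the density-to-semigroup computation above, with this choice of cut-off, then retains the factor $\|e^{tA}\|$ and produces \eqref{strong4} for all $t\ge t_1$. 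The only genuinely new difficulty compared with Theorem~\ref{coup} lies in the frozen-exponent comparison of the second paragraph underlying the small-time bound \eqref{strong3}; the remaining manipulations run parallel to the total-variation computation already carried out.
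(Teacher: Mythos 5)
Your proposal is correct in substance and runs largely parallel to the paper's (sketched) proof: both rest on the Fourier-analytic density machinery of \cite{SSW} applied to the small-jump part $\mu_t^r$ of the law of $X_t^0$, with cut-off $r=1/\varphi^{-1}(1/t)$ in the small-time regime and $r=1/\varphi_t^{-1}(1)$ in the large-time regime, and both pass from small $t$ to all $t>0$ by the same semigroup/contractivity (monotonicity) argument. The one structural difference is the conversion step: the paper never differentiates the density representation. It first establishes total-variation estimates --- for $t\le t_1$ a small-time analogue of \eqref{coup3} with $\varphi^{-1}(1/t)$ in place of $\varphi_t^{-1}(1)$, obtained by mimicking \cite[Theorem 3.2]{SSW}, and for large $t$ the bound \eqref{coup3} of Theorem \ref{coup} itself --- and then converts them into gradient bounds through the Lipschitz quotient
\[
    |\nabla P_tf(x)|
    \le \limsup_{y\to x}\frac{\|P_t(x,\cdot)-P_t(y,\cdot)\|_{\var}}{|x-y|}\,\|f\|_\infty
    \le C\,\|e^{tA}\|\,\varphi^{-1}(1/t)\,\|f\|_\infty ,
\]
together with the fact that $t\mapsto\sup_{\|f\|_\infty\le1}\|\nabla P_tf\|_\infty$ is decreasing. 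Your direct differentiation of $P_tf(x)=\iint f(w)\,p_t^{r}(w-e^{tA}x-z_1)\,dw\,\mmu_t^{r}(dz_1)$, giving $\|\nabla P_tf\|_\infty\le\|e^{tA}\|\,\|f\|_\infty\,\|\nabla p_t^{r}\|_{L^1}$, is a legitimate shortcut: it yields the differentiability of $P_tf$ for free and avoids routing through the total-variation estimates; the paper's route, in exchange, reads the theorem off as a near-immediate corollary of results already proved (Theorem \ref{coup} and its small-time analogue), at the cost of interpreting $\nabla P_tf$ via difference quotients.

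One caution on the step you rightly single out as the main obstacle. Passing from the integrated exponent $\int_0^t\Re\Phi(B^\top e^{sA^\top}\xi)\,ds$ to the frozen one $t\,\Re\Phi(B^\top\xi)$ is also left implicit in the paper (its proof is explicitly only a sketch deferring to \cite{SSW}), so this is not a gap relative to the paper; but your justification of it is wrong as stated. The measures $\nu_t(C)=\int_0^t\nu(e^{-sA}C)\,ds$ and $t\nu$ are in general \emph{not} comparable --- for an atomic $\nu$ with $Az_0\neq0$ they are even mutually singular --- so ``$\nu_t$ is comparable with $t\nu$'' cannot serve as the argument. What is actually needed is a one-sided bound on the level of the real parts, of the form $\int_0^t\Re\Phi(B^\top e^{sA^\top}\xi)\,ds\ge c_1t\,\Re\Phi(B^\top\xi)-c_2$ for all $\xi$ and $t\le t_1$; this can be attacked via the subadditivity-type inequality $\Re\Phi(\eta+\eta')\le2\Re\Phi(\eta)+2\Re\Phi(\eta')$ with $\eta=B^\top e^{sA^\top}\xi$ and $\eta'=B^\top(\id-e^{sA^\top})\xi$, after which the whole difficulty sits in controlling the error term $\Re\Phi\big(B^\top(\id-e^{sA^\top})\xi\big)$ uniformly in $\xi$. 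Note that the quantifier ``for any $C>0$'' in \eqref{strong2} is precisely designed to absorb the constant $c_1$ from such a comparison (and the analogous comparison is also what makes \eqref{strong1} usable in place of \eqref{coup1} for the large-time assertion \eqref{strong4}). Your outline is complete once this comparison is formulated and proved in that form.
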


To illustrate the power of Theorem \ref{strong}, we consider
\begin{example}\label{examplegradient}
      Let $\mu$ be a finite nonnegative measure on the unit sphere $\sfera\subset \R^n$ and assume that $\mu$ is nondegenerate in the sense that its support is not contained in any proper linear subspace of $\R^n$. Let $\alpha\in(0,2)$, $\beta\in (0,\infty]$ and assume that the L\'evy measure $\nu$ satisfies
$$
    \nu(C)
    \geq \int_0^{r_0}\int_{\sfera}\I_C(s\theta)s^{-1-\alpha} \,ds\, \mu(d\theta)
    + \int_{r_0}^\infty\int_{\sfera}\I_C(s\theta)s^{-1-\beta}\,ds\, \mu(d\theta)
$$
    for some constant $r_0>0$ and all $C\in \Bb(\R^n\setminus\{0\})$. Consider the following Ornstein-Uhlenbeck process $X_t$ on $\R^n$ given by
    $$
        dX_t=AX_t\,dt+dZ_t,
    $$
    where $(Z_t)_{t\ge0}$ is a L\'evy process on $\R^n$ with the L\'{e}vy measure $\nu$. By Theorem \ref{strong} there exists a constant $c>0$ such that for all $t>0$ and $f\in B_b(\R^n)$,
    $$
        \|\nabla P_tf\|_\infty
        \le  c\,\|f\|_\infty \, (t\wedge 1)^{-1/\alpha}.
    $$
    Furthermore, if the real parts of all eigenvalues of $A$ are negative, then there exists a constant $c>0$ such that for all $t>0$ and $f\in B_b(\R^n)$,
    $$
        \|\nabla P_tf\|_\infty
        \le  c\,\|f\|_\infty \,\frac{\|e^{tA}\|\quad}{(t\wedge 1)^{1/\alpha}}.
    $$
    \end{example}

\medskip

Recently, F.-Y.\ Wang \cite[Theorem 1.1]{W2}
has presented explicit gradient estimates for Ornstein-Uhlenbeck processes, by assuming
that the corresponding L\'{e}vy measure has absolutely continuous
(\emph{with respect to Lebesgue measure}) lower bounds. Since lower
bounds of L\'{e}vy measure in Example \ref{examplegradient} could be much
irregular, Theorem \ref{strong} is more applicable than
\cite[Theorem 1.1]{W2}.

\medskip

\begin{proof}[Sketch of the Proof of Theorem \ref{strong}]
    Assuming the conditions \eqref{strong1} and \eqref{strong2}, we can mimic the proof of \cite[Theorem 3.2]{SSW} to show that there exist $t_1, C>0$ such that for all $x,y\in\R^n$ and $t\le t_1$,
\begin{equation*}\label{proofstrong}
    \|P_t(x,\cdot)-P_t(y,\cdot)\|_{\var}
    \le C\,|e^{tA}(x-y)|\,\varphi^{-1}\Big(\frac{1}{t}\Big).
\end{equation*}
    Thus we can apply to find for all $f\in B_b(\R^n)$ with $\|f\|_\infty= 1$,
\begin{equation}\label{proofstrong}\begin{aligned}
    |\nabla P_tf(x)|
    &\leq \limsup_{y\to x}\frac{|P_tf(x)-P_tf(y)|}{|y-x|}\\
    &\leq \limsup_{y\to x}\frac{\sup_{\|w\|_\infty\le 1}|P_tw(x)-P_tw(y)|}{|y-x|}\\
    &\leq \limsup_{y\to x}\frac{\|P_t(x,\cdot)-P_t(y,\cdot)\|_{\var}}{|y-x|}\\
    &\leq C\,\|e^{tA}\|\,\varphi^{-1}\Big(\frac{1}{t}\Big)\\
    &\leq \Big[C\,\sup_{s\le t_1}\|e^{sA}\|\Big]\,\varphi^{-1}\Big(\frac{1}{t}\Big).
\end{aligned}\end{equation}  Because of the Markov property of the semigroup $P_t$, the function $$t\mapsto \sup_{f\in B_b(\R^n),\, \|f\|_\infty =1}{ \|\nabla P_t f\|_\infty}$$ is deceasing. Combining this and \eqref{proofstrong} yields \eqref{strong3}.

    The assertion \eqref{strong4} follows if we combine the above argument with \eqref{coup3}: there exist $t_2,C>0$ such that for all $x,y\in\R^n$ and $t\ge t_2$,
\begin{gather*}
    \|P_t(x,\cdot)-P_t(y,\cdot)\|_{\var}\leq C\,|e^{tA}(x-y)|\,\varphi_t^{-1}(1).
\qedhere
\end{gather*}
 \end{proof}

\subsection{Proof of Proposition \ref{improvement}}\label{subsec-appendix2}
\begin{proof}[Proof of Proposition \ref{improvement}]
    Because of \eqref{wang22}, we can choose a closed subset $F\subset \overline{B(z_0, \varepsilon)}$ such that $0\notin F$ and
    $$
        \int_F \frac{dz}{\rho_0(z)} < \infty.
    $$
    By the Cauchy-Schwarz inequality, we have
    $$
        \left(\int_F \rho_0(z)\,dz\right)^{-1}
        \le \frac{1}{\Leb(F)^2} \int_F \frac{dz}{\rho_0(z)} < \infty.
    $$
    Hence,
    $$
        K:=\int_F\rho_0(z)\,dz>0.
    $$

    Since $F$ is a compact set and $0\notin F$, there exists some $\delta_0>0$ such that
$
        0\notin F+\overline{B(0,\delta_0)},
    $ where $F+\overline{B(0,\delta_0)}:=\{a+b:a\in F, |b|\le \delta_0\}.$
Since $\rho_0$ is locally integrable, we know that $$
        K\le\int_{F+\overline{B(0,\delta_0)}}\rho_0(z)\,dz<\infty.
    $$
    The remainder of the proof is now similar to the argument which shows that the shift $x\mapsto \|f(\cdot - x)-f\|_{L^1}$, $f\in L^1(\R^d,\Leb)$, is continuous, see e.g.\ \cite[Lemma 6.3.5]{STR} or \cite[Theorem 14.8]{RSCC}: choose $\chi\in C_c^\infty (\R^d)$ such that $\supp\chi\subset F+\overline{B(0,\delta_0)}$ and $$
        \int_{F+\overline{B(0,\delta_0)}}|\rho_0(z)-\chi(z)|\, dz\le \frac{K}{4}.
    $$
    Therefore, for any $x\in\R^d$ with $|x|\le \delta_0$, we obtain
    \begin{align*}
    \int_F & |\rho_0(z)-\rho_0(z-x)|\,dz\\
    &\le \int_F|\rho_0(z)-\chi(z)|\,dz
        +\int_F|\chi(z)-\chi(z-x)|\,dz
        +\int_F|\rho_0(z-x)-\chi(z-x)|\,dz\\
    &= \int_F|\rho_0(z)-\chi(z)|\,dz
        +\int_F|\chi(z)-\chi(z-x)|\,dz
        +\int_{F+x}|\rho_0(z)-\chi(z)|\,dz\\
    &\le 2\int_{F+\overline{B(0,\delta_0)}}|\rho_0(z)-\chi(z)|\,dz
        +\int_F|\chi(z)-\chi(z-x)|\,dz\\
    &\le \frac{K}{2}+\int_F|\chi(z)-\chi(z-x)|\,dz.
    \end{align*}
    By the dominated convergence theorem we see that
    $$
        x\mapsto\int_F|\chi(z)-\chi(z-x)|\,dz
    $$
    is continuous on $\R^d$. Therefore, there exists $0<\delta\le\delta_0$ such that
    $$
        \sup_{x\in\R^d, |x|\le \delta}\int_F|\chi(z)-\chi(z-x)|\,dz\le \frac{K}{4}
    $$
    and, in particular,
    $$
        \sup_{x\in\R^d, |x|\le \delta}\int_F|\rho_0(z)-\rho_0(z-x)|\,dz\le \frac{3K}{4}.
    $$
    Using $2(a\wedge b)=a+b-|a-b|$ for all $a,b\ge0$, we get
    \begin{align*}
        \inf_{x\in\R^d, |x|\le \delta} & \int_F\big(\rho_0(z)\wedge\rho_0(z-x)\big)\,dz\\
        &=\frac{1}{2}\inf_{x\in\R^d, |x|\le \delta} \bigg[\int_F\big(\rho_0(z)+\rho_0(z-x)\big)\,dz
            - \int_F\big|\rho_0(z)-\rho_0(z-x)\big|\,dz\bigg]\\
        &\ge \frac{1}{2}\int_F\rho_0(z)\,dz
            - \frac{1}{2}\sup_{x\in\R^d, |x|\le \delta}\int_F\big|\rho_0(z)-\rho_0(z-x)\big|\,dz\\
        &\ge \frac{K}{8}>0.
    \end{align*}
    This finishes the proof.
\end{proof}

\begin{ack}
Financial support through DFG (grant Schi 419/5-1) and DAAD (PPP Kroatien) (for Ren\'{e} L.\ Schilling)
and the Alexander-von-Humboldt Foundation
  and the Natural Science Foundation of Fujian $($No.\ 2010J05002$)$
(for Jian Wang) is gratefully acknowledged.
\end{ack}

\end{document}